\documentclass{amsart}

\usepackage{graphicx}

\usepackage{amsmath}
\usepackage{amsfonts}
\usepackage{mathtools}
\usepackage{amsthm}

\usepackage[final]{hyperref}
\usepackage{cleveref}

\usepackage{xcolor}
\usepackage{tikz}
\usepackage{adjustbox}

\hypersetup{hidelinks}

\tikzset{naming/.style={align=center,font=\footnotesize}}
\tikzset{area/.style = {draw, shape = regular polygon, regular polygon sides = 6, thick, minimum width = 5cm}}

\newcommand{\semb}[2]{\operatorname{Emb}\left(#1,#2\right)}
\newcommand{\nemb}[2]{|\operatorname{Emb}\left(#1,#2\right)|}
\newcommand{\negnemb}[3]{|\operatorname{Emb}\left(#1, \neg#2, #3\right)|}
\newcommand{\emb}[2]{\operatorname{emb}\left(#1, #2\right)}
\newcommand{\diffemb}[2]{\operatorname{N}\left(#2, #1\right)}

\newcommand{\aut}[1]{\left|\operatorname{Aut}\left(#1\right)\right|}
\newcommand{\ent}[1]{\textsf{H}\left(#1\right)}
\newcommand{\concom}[1]{\mathcal{C}(#1)}

\numberwithin{equation}{section}

\theoremstyle{plain}
\newtheorem{thm}{Theorem}[section]
\newtheorem{lemma}[thm]{Lemma}
\newtheorem{claim}[thm]{Claim}
\newtheorem{corollary}[thm]{Corollary}
\newtheorem{proposition}[thm]{Proposition}

\AddToHook{env/lemma/begin}{\crefalias{thm}{lemma}}
\AddToHook{env/claim/begin}{\crefalias{thm}{claim}}
\AddToHook{env/corollary/begin}{\crefalias{thm}{corollary}}
\AddToHook{env/proposition/begin}{\crefalias{thm}{proposition}}

\crefname{claim}{claim}{claims}
\Crefname{claim}{Claim}{Claims}

\title[Maximal number of copies of a given graph]{Maximizing copies of a fixed graph in graphs with a prescribed number of edges}

\date{\today}

\author{  
    Yishai Meir
}

\begin{document}

\begin{abstract}
For a graph $H$ denote by $\emb{H}{m}$ the maximal number of labeled embeddings of $H$ in a graph of size $m$. Erd\H{o}s posed the question of finding $\emb{H}{m}$ for different values of $H$ and $m$. Following related asymptotic and stability results, we determine the value of $\emb{H}{\binom{n}{2}}$ for every graph $H$ with fractional independence number $v_H/2$ and all sufficiently large $n$. We also show that for those $H$ (except for matchings) and $n$ the only graph with a maximal number of embeddings is $K_n$. This fully characterizes the graphs for which $K_n$ achieves the maximal number of embeddings.
\end{abstract}

\maketitle

\section{Introduction}
This paper discusses the maximal number of copies of certain types of graphs in graphs of a given size. 

One of the first results on the topic is given by P.\ Erd\H{o}s and H.\ Hanani \cite[Theorem 2]{Erdos1962}, showing that the largest number of subgraphs isomorphic to $K_\ell$ in a graph with $\binom{t}{2}+r$ edges (where $0 < r \leq t$) is $\binom{t}{\ell}+\binom{r}{\ell-1}$. A similar result is given by Lovász's version of the Kruskal-Katona theorem, which states that a simple graph with $\binom{x}{2}$ edges contains at most $\binom{x}{\ell}$ copies of $K_\ell$. This is true even for non-integer values of $x$ (a short proof can be found in \cite[Theorem 1]{Keevash2008}, and an entropy based proof in \cite[Section 4]{KruskalKatonaTypeProblemsEntropy}). In particular, $K_n$ contains the largest number of copies of $K_\ell$ among all graphs with $\binom{n}{2}$ edges.

We generalize this result to other graphs: Let $H,G$ be graphs, an embedding of $H$ in $G$ is an injective mapping $\varphi:V(H)\rightarrow V(G)$ such that for every edge $v_1v_2\in E(H)$  we have $\varphi(v_1)\varphi(v_2) \in E(G)$. Denote by $\semb{H}{G}$ the set of embeddings of $H$ in $G$. Similarly, we denote  $\emb{H}{m} \coloneqq \max_{G:e_G=m}\nemb{H}{G}$.
If $\nemb{H}{G}=\emb{H}{m}$ and $e_G=m$, we say that $G$ is an $m$-extremal graph for $H$. 
Using this notation, the previous claim can be stated as $\emb{K_\ell}{\binom{x}{2}}\leq\binom{x}{\ell}$, and $K_n$ is $\binom{n}{2}$-extremal for $K_\ell$. P. Erd\H{o}s proposed the problem of determining $\emb{H}{m}$ for other $H$, $m$ and finding their extremal graphs. Erd\H{o}s originally framed the problem using the notation $\diffemb{H}{m}\coloneqq \frac{\emb{H}{m}}{\aut{H}}$, i.e.\ the maximal number of different subgraphs isomorphic to $H$
in a graph of size $m$.

This question leads to our first proposition:
\begin{proposition}[N.\ Alon \cite{Alon1981}]\label{nogas}
For any graph $H$, there exists $c>0$ such that 
\begin{align*}
\left(cm\right)^{\alpha^{\ast}(H)}\leq \emb{H}{m}\leq(2m)^{\alpha^{\ast}(H)}.    
\end{align*}
\end{proposition}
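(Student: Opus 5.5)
Both bounds go through the linear programming definition of the fractional independence number. Recall $\alpha^\ast(H)=\max\sum_{v}x_v$ over $x:V(H)\to[0,1]$ with $x_u+x_v\le 1$ for every edge $uv$. By LP duality this equals the fractional edge cover number $\min\sum_e y_e$ over $y\ge 0$ with $\sum_{e\ni v}y_e\ge 1$ for every vertex $v$. Moreover, an optimal $x$ can be taken half-integral, with values in $\{0,\tfrac12,1\}$. We assume $H$ has no isolated vertices; otherwise each isolated vertex contributes a factor of at most $v_G\le 2m$, matching its contribution $1$ to $\alpha^\ast$, provided we work with graphs of size $m$ without isolated vertices.

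\textbf{Lower bound.} Take a half-integral optimal $x$ and split $V(H)=V_0\cup V_{1/2}\cup V_1$ by value. Edges inside $V_1$, and between $V_1$ and $V_{1/2}$, are forbidden. Hence $V_1$ is independent and its neighbours lie in $V_0$, so $\alpha^\ast(H)=|V_1|+|V_{1/2}|/2$.

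Build $G$ as follows. Take a clique $K$ on $k\approx\sqrt{m}$ vertices and a set $A$ of $a=|V_0|$ further vertices. Join $A$ to everything, and add $s\approx c'm$ further vertices each adjacent to all of $A$. The edge count is about $\binom{k}{2}+a(k+s)+\binom a2=O(m)$, so scaling the constants gives at most $m$ edges.

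Now embed $H$ by sending $V_0$ injectively into $A$, $V_{1/2}$ injectively into $K$, and $V_1$ injectively into the pendant-type vertices. Every edge of $H$ lands on an edge of $G$, since all edges of $H$ lie inside $V_0$, within $V_{1/2}\cup V_0$, or between $V_1$ and $V_0$. The number of embeddings is at least $(k-|V_{1/2}|)^{|V_{1/2}|}(s-|V_1|)^{|V_1|}\ge (cm)^{|V_{1/2}|/2+|V_1|}$ for large $m$. For small $m$ we take $c$ small, using that $\emb{H}{m}\ge1$ once $m\ge e_H$.

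\textbf{Upper bound.} We prove $\nemb{H}{G}\le (2m)^{\alpha^\ast(H)}$ with a fractional cover argument; this is the main step. Take an optimal fractional edge cover $y$. It too can be taken half-integral, and then its support decomposes into disjoint edges (weight $1$) and vertex-disjoint odd cycles (weight $\tfrac12$ on each edge) covering $V(H)$. After deleting extra edges, $H$ contains a spanning subgraph $H'$ that is a disjoint union of edges (or stars, where each leaf is covered by its own weight-$1$ edge) and odd cycles, with total weight $\alpha^\ast(H)$.

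Since $\nemb{H}{G}\le\nemb{H'}{G}$ and embeddings of disjoint components multiply at most, we bound each component separately:
\begin{itemize}
\item An edge has at most $2m$ embeddings.
\item A star $K_{1,t}$ covered by weight-$1$ edges on each leaf has weight $t$, and has at most $\sum_v d(v)^t\le(\sum_v d(v))^t=(2m)^t$ embeddings.
\item An odd cycle $C_{2\ell+1}$, of weight $\ell+\tfrac12$, needs $\nemb{C_{2\ell+1}}{G}\le(2m)^{\ell+1/2}$.
\end{itemize}

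The odd cycle is the main obstacle. We would handle it as follows. Fix the image of a spanning path's alternate edges: choose $\ell$ ordered edges for $\ell$ disjoint edges of the cycle, giving at most $(2m)^\ell$ choices, which leaves one vertex $w$ adjacent to two specified images. Those images are determined, so $w$ has at most $\min(d(u),d(u'))\le\sqrt{d(u)d(u')}$ choices. Averaging with Cauchy–Schwarz over a cyclic shift of which edges are chosen, or more simply using $\sum\min(d(u),\cdot)\le\sum_{uv\in E}\min(d(u),d(v))\le 2m^{3/2}$-type bounds, gives the extra factor $\sqrt{2m}$.

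To present this cleanly I would instead use the standard entropy/Shearer-type inequality. Let $\varphi$ be a uniformly random embedding of $H'$. Then
\[
\log\nemb{H'}{G}=\ent{\varphi}\le\sum_{e}y_e\,\ent{\varphi|_e}\le\sum_e y_e\log(2m),
\]
where the first inequality is Shearer's lemma applied to the cover $y$ (each vertex is covered with total weight at least $1$). This gives $(2m)^{\alpha^\ast(H)}$ directly for general $H$, without the structural decomposition.
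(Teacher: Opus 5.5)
Your proof is correct and is essentially the route the paper relies on. The paper gives no proof of its own; it cites Alon, whose lower bound uses a half-integral optimal weighting and the same kind of clique-plus-universal-vertices construction you give. This bound holds only for $m\ge e_H$, as you note, since $\emb{H}{m}=0$ for smaller $m$. For the upper bound it cites Friedgut--Kahn as presented by Galvin, which is exactly your weighted Shearer bound $\ent{\varphi}\le\sum_e y_e\,\ent{\varphi|_e}\le\alpha^\ast(H)\log(2m)$ for an optimal fractional edge cover $y$.

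You should delete the direct odd-cycle count sketched before the Shearer step, since it does not work. For $\ell\ge 2$ the two anchors $u,u'$ are endpoints of different, independently chosen edges, so $uu'$ need not be an edge of $G$, and summing $\min(d(u),d(u'))$ over all choices overshoots badly. For example, for $C_5$ in two disjoint stars $K_{1,m/2}$, sending the anchors to the two centres already contributes about $m^3/8\gg(2m)^{5/2}$.
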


Here, $\alpha^\ast(H)$ denotes the fractional independence number of $H$ (For the unacquainted reader, the term is defined in \Cref{def:FractionalIndependenceNumber}).
 An asymptotic version of this theorem was first proven by N.\ Alon in \cite[Theorems 1, 2, 4]{Alon1981}. His result also includes a more exact bound for cases in which $\alpha^\ast(H)=v_H/2$, proving that for those graphs $\nemb{H}{m} = \left(1+O(m^{-1/2})\right)(2m)^{\alpha^\ast(H)}$. E.\ Friedgut and J.\ Kahn used an elegant entropy argument to generalize this result to hypergraphs \cite{OnTheNumberOfCopiesOfOneHypergraphInAnother}. A version of their proof for ordinary graphs is presented in \cite[Section 4]{TutorialLecturesOnEntropy}, which also explicitly notes the constant in the bound.

 Recently, those results were further refined by P.\ Kuang, S.\ Sun, Y.\ Wang, and J.\ Zeng in \cite{ProofsOfTwoConjecturesOfAlonOnSubgraphCounts}, showing that for every graph $H$
 \begin{align} \nonumber
 \lim_{m\rightarrow \infty} \frac{\emb{H}{m}}{m^{\alpha^\ast(H)}} = \Lambda(H)
 \end{align}
 where $\Lambda(H)$ is given by an optimization problem over finite graphs and a polytope of weights. The authors also explicitly computed $\Lambda(H)$ for star forests and paths of even length.

In recent years, several stability results were published on the topic. M.\ Harel, F.\ Mousset, and W.\ Samotij proved in \cite[Claims 5.12, 5.13]{UpperTailsViaHighMoments}\footnote{The reader should note that what we call $P_3$, they denote by $P_4$, counting by vertices rather than edges.} a stability result, showing that if $\nemb{P_3}{G}$ or $\nemb{K_k}{G}$ are large, then $G$ must contain a subgraph of high minimum degree\footnote{See \Cref{prop1:claim:path3} for a precise statement}. Using those results, A.\ Basak and S.\ Karmakar proved in \cite[Lemma A.2]{UpperTailBoundsForIrregularGraphs} that the same is true regarding $\nemb{H}{G}$ for any connected regular graph $H$.

Further results were shown for more specific types of graphs. Notably, in \cite{PathsInGraphs} and \cite {PathsOfLengthFour} B.\ Bollobás and A.\ Sarkar determined $\emb{P_3}{m}$ and $\emb{P_4}{m}$ exactly for sufficiently large values of $m$. They also showed bounds for $\emb{P_k}{m}$ for any value of $k$. The methods used to attain those bounds differ depending on whether $k$ is odd or even\footnote{It is worth noting that while Alon, Bollobás, and Sarkar used the previously mentioned $\diffemb{\cdot}{\cdot}$ notation, we will use the $\emb{\cdot}{\cdot}$ notation as presented in \cite{TutorialLecturesOnEntropy}.}. 

The reader may notice that for both cliques and odd paths, the fractional independence number is half the number of vertices. Recall Alon's result  
\begin{align*}
\alpha^\ast(H)=\frac{v_H}{2} \implies \nemb{H}{m} = \left(1+O(m^{-1/2})\right)(2m)^{\alpha^\ast(H)}.
\end{align*}
This suggests that perhaps $\alpha^\ast(H) = \frac{v_H}{2}$ implies $\emb{H}{\binom{n}{2}}=\emb{H}{K_n}$ for sufficiently large $n$. However, a matching is an obvious counterexample. In our main theorem, we show that it is in fact the only counterexample.

\begin{thm}\label{main}
Let $H$ be a graph with no isolated vertices. Then the following are equivalent:
\begin{enumerate}
    \item for any sufficiently large $n$, $K_n$ is the unique graph with $\binom{n}{2}$ edges maximizing $\nemb{H}{G}$.
    \item $\alpha^{\ast}(H)=\frac{v_H}{2}$ and $H$ is not a matching.
\end{enumerate}
\end{thm}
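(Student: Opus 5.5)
The plan is to prove the two implications separately. Throughout I use the fact that $\alpha^\ast(H)\ge v_H/2$ always holds, since the constant weight $1/2$ is a feasible fractional independent set. I also use that, by LP duality, $\alpha^\ast(H)=v_H/2$ is equivalent to $H$ having a fractional perfect matching. This in turn is equivalent to $H$ having a spanning subgraph $F$ whose components are single edges and odd cycles.

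\textbf{(1)$\Rightarrow$(2).} Suppose first that $\alpha^\ast(H)>v_H/2$. Then $\nemb{H}{K_n}\le n^{v_H}=O(m^{v_H/2})$ with $m=\binom n2$. The lower bound of \Cref{nogas} gives $\emb{H}{m}\ge (cm)^{\alpha^\ast(H)}$, which is asymptotically larger, so $K_n$ is not extremal for large $n$.

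Suppose next that $H$ is a matching with $k\ge 2$ edges. In any $G$ with $m$ edges, $\nemb{H}{G}$ equals $2^k$ times the number of ordered $k$-tuples of pairwise disjoint edges. By inclusion--exclusion the leading correction is
\[
-2^k\binom k2\sum_v\binom{d(v)}{2}.
\]
This term is of order $n^{2k-1}$ for $K_n$. It is much smaller for a graph with the same number of edges and smaller degrees, for instance a disjoint union of $K_{\sqrt n}$'s or a graph of max degree about $\sqrt m$. The main term $(2m)^k$ agrees, and the higher-order terms are of lower order. Hence such a graph beats $K_n$.

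\textbf{(2)$\Rightarrow$(1).} This is the main part, which I would split into an upper bound, a stability step and an exact cleanup.

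\emph{Upper bound.} Fix the spanning subgraph $F$ of $H$ made of edges and odd cycles $C_{2l+1}$. We have $\nemb{H}{G}\le\nemb{F}{G}\le\prod_{\text{comp}}\hom(\cdot,G)$, and the factors are bounded as follows:
\[
\hom(K_2,G)=2m,\qquad \hom(C_{2l+1},G)=\operatorname{tr}A^{2l+1}\le\sum_i|\lambda_i|^{2l+1}\le\Big(\sum_i\lambda_i^2\Big)^{(2l+1)/2}=(2m)^{(2l+1)/2}.
\]
Since $H$ is not a matching, $H$ contains a vertex of degree at least $2$, i.e.\ a path with two edges. This lets me gain over the trivial count.

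\emph{Stability.} Assume $\nemb{H}{G}\ge\nemb{H}{K_n}=(1-O(1/n))(2m)^{v_H/2}$. Then near-equality must hold in each inequality above. In particular the spectral inequality forces $\lambda_1^2\ge(1-o(1))2m$, or else the extra edges of $H$ force near-equality in a Cauchy--Schwarz step over degrees. Either way $G$ has a set $S$ of about $n$ vertices spanning all but $o(n^2)$ edges, with almost all degrees about $n$. Alternatively, I can invoke the Harel--Mousset--Samotij and Basak--Karmakar style minimum-degree stability cited in the introduction for the path/cycle pieces.

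\emph{Exact cleanup.} This is where I expect the main obstacle to lie. I would set $G$ to be $K_s$ minus a set $M$ of missing pairs, plus a set $E'$ of extra edges touching outside vertices, with $|E'|=|M|+\binom n2-\binom s2$. I would then compare counts directly. Each missing pair inside $S$ costs about $c_1 n^{v_H-2}$ embeddings, where $c_1=2e_H$ times the appropriate falling factorial. Each extra edge $uv$ with $u$ of degree $d_u\ll n$ gains only $O(n^{v_H-2})$ times a factor strictly below $c_1$. The reason is that embeddings using $uv$ at a vertex of $H$ of degree $\ge2$ need further edges at $u$, so they contribute $O(d_u n^{v_H-3})$. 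The single-edge components are the only ones insensitive to this, and $H$ has at least one non-edge component. Carrying this out requires a careful weighting by degrees, followed by showing that any $G\ne K_n$ can be improved by moving an edge into $S$. That would give strict inequality $\nemb{H}{G}<\nemb{H}{K_n}$ for all large $n$, hence uniqueness.
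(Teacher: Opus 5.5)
Your proposal has a genuine gap in the exact step of (2)$\Rightarrow$(1). That step is where the theorem actually lives, and you leave it as a sketch. Moreover, the mechanism you sketch cannot finish it.

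Your first-order bookkeeping says: a missing pair inside $S$ costs about $2e_Hn^{v_H-2}$, and an edge at an outside vertex with $d_u\ll n$ is worth strictly less, so move it into $S$. This only bites while $S$ still has missing pairs and some vertex has degree noticeably below $n$. That is essentially the paper's \Cref{separation}, which pushes the same per-edge comparison up to $\delta(G)<cn$ with $c$ close to $1$. Two configurations survive it, and in both your move is either unavailable or shows no first-order gain.

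\emph{(a) The dense part is already a clique.} Here $S$ is $K_s$ with $s<n$, and the remaining $\binom n2-\binom s2$ edges hang on outside vertices with up to $(1-\varepsilon)n$ neighbours in it. There is no missing pair in $S$ to move an edge into, and your bound $O(d_un^{v_H-3})$ is of the same order as the main term. The relevant benchmark is also $v_H$ per relocated edge, not $2e_H$: shrinking $K_n$ to $K_s$ loses about $v_H(n-s)n^{v_H-1}$ embeddings while freeing only about $(n-s)n$ edges.

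\emph{(b) High minimum degree but $v_G>n$.} If $\delta(G)\ge cn$ and $v_G>n$, there are no low-degree vertices at all. Every edge and every non-edge is worth $2e_Hn^{v_H-2}$ up to a factor $1\pm O(1-c)$, so no single edge move shows a first-order gain.

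What is needed are global arguments like the paper's two lemmas.

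For (a), \Cref{handleClique} writes $v_{G'}=(1-\zeta)n$ and bounds all embeddings using an edge outside the clique by $\zeta n^{v_H}(v_H-\varepsilon/2)+O(\zeta^{3/2}n^{v_H})$. The saving comes from the $(1-\varepsilon)n$ cap, felt at the $P_3$ and odd-cycle components. Comparing with $(n)_{v_H}-(v_{G'})_{v_H}\approx v_H\zeta n^{v_H}$ and using $\zeta\le\varepsilon^4$ forces $\zeta=0$.

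For (b), \Cref{ShowAndhandleO1} first shows $v_G=n+O(1)$. The reason is that each non-edge between vertices of degree $\ge cn$ rules out $\Omega(n^{v_C-2})$ potential copies of a triangle or $P_3$ component $C$; Shearer's lemma reduces longer odd cycles to $P_3$. It then makes a vertex-level move: delete a minimum-degree vertex and spend its $\ge cn$ edges on non-edges. This loses at most $v_Hv_G^{v_H-1}$ embeddings and gains nearly $2e_Hn^{v_H-1}$, which wins exactly because $2e_H>v_H$ when $H$ is not a matching. Without arguments of this kind, or a genuine second-order compression analysis, you have not shown that an extremal graph must be $K_n$.

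Smaller points in (1)$\Rightarrow$(2):
\begin{itemize}
\item You skip $H=K_2$, where every graph has exactly $2m$ embeddings, so uniqueness fails trivially.
\item For $k\ge 2$ it is cleaner to note that $m$ disjoint edges attain the trivial maximum $2^k(m)_k$, which $K_n$ does not.
\item Your displayed inclusion--exclusion term is missing a factor $m^{k-2}$.
\end{itemize}
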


It is easy to see that any graph is extremal for a single edge. Notice also that the only $m$-extremal graph for a disjoint union of at least two edges (i.e.\ a matching of size greater than one)  is a disjoint union of $m$ edges. Using both facts in combination with \Cref{main} settles $\emb{H}{\binom{n}{2}}$ for any graph $H$ with $\alpha^\ast(H)=\frac{v_H}{2}$ and sufficiently large $n$.

For further reading, related results for star forests appear in \cite{Alon1986}, \cite{Furedi1992}, and \cite{ProofsOfTwoConjecturesOfAlonOnSubgraphCounts}. A variation of the problem for colored triangles can be found in \cite{KruskalKatonaTypeProblemsEntropy}.

\section{Definitions and notations}
All discussed graphs are simple, without loops and with no isolated vertices. The length of a path is measured by its edges (i.e.\ a path of odd length has an odd number of edges and an even number of vertices). Denote by $\concom{G}$ the set of connected components of $G$. If $H$ is a subgraph of $G$ we define $G - H$ as the subgraph of $G$ induced by $V(G)\setminus V(H)$.

\label{def:FractionalIndependenceNumber}
The independence number of a graph $G$, denoted by $\alpha(G)$, is the size of the largest independent set (i.e.\ a subset $V$ of $V(G)$ such that $G[V]$ is edgeless). This can be thought of as the solution to the following integer program 
\begin{align*}
 & \max{\sum_{v\in V(G)} w_v} 
 \\
 & \text{s.t.} \quad w_v + w_u \leq 1 \quad \forall vu \in E(G)
 \\
 &w_v \in \{0, 1\} \quad \forall v \in V(G).
\end{align*}
The fractional independence number of $G$, denoted by $\alpha^\ast(G)$, is the solution to the linear relaxation of the above program given by allowing $w_v$ to take values in  $[0 ,1]$.

Denote by $\semb{H}{G}$ the set of embeddings of $H$ in $G$. Similarly, we denote 
\begin{align*}
\emb{H}{m} \coloneqq \max_{G:e_G=m}\nemb{H}{G}.
\end{align*}
If $\nemb{H}{G}=\emb{H}{m}$ and $e_G=m$, we say that $G$ is an $m$-extremal graph for $H$.

We say that a graph $H$ is strongly dependent if $\alpha^\ast(H)=\frac{v_H}{2}$ and that it is \emph{linked and strongly dependent} (LSD) if $H$ is also not a matching. 

We now describe an important class of linked and strongly dependent graphs. We say that a graph $H$ is \emph{elementary} if it is a disjoint union of paths of length three, odd cycles, and edges, which is not a matching. For such a graph, we say that a set of odd cycles $C_1,\dots,C_{k_c}$, paths of length three $P_1,\dots,P_{k_p}$, and edges $e_1,\dots,e_{k_e}$ such that $\concom{H}=\{C_i\}_{i=1}^{k_c}\uplus\{P_i\}_{i=1}^{k_p}\uplus\{e_i\}_{i=1}^{k_e}$ and $k_c+k_p\geq 1$ is a nice decomposition of $H$.

\section{Proof outline}\label{section:proofOutline}

We will state two other simple propositions here: The first, \Cref{prop:LSDToElementary}, states that every LSD graph is spanned by an elementary graph. The second, \Cref{prop:spannedBound}, states that if a graph $H^\prime$ spans another graph $H$, then for any given graph $G$, $\nemb{H}{G} \leq \nemb{H^\prime}{G}$. Both propositions are considered folklore. For completeness, short proofs will be presented later on in the paper.  

Notice that using those propositions, it is sufficient to prove our result for elementary graphs and we will get our main result, \Cref{main}, as a simple corollary. Indeed, for the rest of this paper we will mainly discuss embeddings of elementary graphs.

In \Cref{section:stability} we will use the stability results mentioned in the introduction to prove that for every elementary graph $H$, any large enough graph which is $\binom{n}{2}$-extremal for $H$ will contain a subgraph of high minimum degree. This is formalized in \Cref{stability}.

In \Cref{section:ProofOfMain} we prove \Cref{main}. We split the proof into three different steps, each encapsulated in a lemma.

First, in \Cref{separation}, we use the stability result to prove that any large extremal graph must fulfill one of the following: Either the minimum degree of the graph is $cn$ for some constant $c<1$, or it consists of a clique of order $\left(1-\varepsilon\right)n$ and a set of vertices which are sparsely connected to it. The core idea behind this proof is shifting edges from low degree vertices to a dense subgraph, whose existence is given by the stability results. If this dense subgraph is already a clique, then we will fall into the second case. 

Next, we will show that in either case, the $\binom{n}{2}$-extremal graph must be a copy of $K_n$.

Starting from the second case. In \Cref{handleClique} we show that if an $\binom{n}{2}$-extremal graph $G$ for $H$ consists of a large clique $G^\prime$ and a set of vertices loosely connected to it, then $G=G^\prime$. To do so, we show that the number of embeddings utilizing edges outside of $G^\prime$ is tightly bounded, and thus few embeddings use edges in $E(G)\setminus E(G^\prime)$, resulting in fewer embeddings the smaller $G^\prime$ is.

As for the first case, in \Cref{ShowAndhandleO1} we show that if $G$, an $\binom{n}{2}$-extremal graph of $H$, has high minimum degree, then it must be a clique. Our proof here is split into two parts. In the first we show that $G$ may contain only $O(1)$ more vertices than a clique of the same size. The proof of this part goes as follows: we show that if $H$ contains a copy of $P_3$ then since $G$ has a minimum degree, non-edges "cost" us many embeddings, and therefore there should not be many of those. We then use an entropy argument to expand this to cases in which $H$ contains an odd cycle. In the second part, we use this result to show that $G$ must be a clique. The proof here is based around the idea that if $G$ is not a clique, we can remove one vertex and disperse its edges throughout the rest of the graph, resulting in more embeddings.

Finally, using those lemmas, we prove that for any elementary graph $H$, a large $\binom{n}{2}$-extremal graph of $H$ must be a clique (this step is formalized in \Cref{weakMain}). We then use the previously mentioned propositions regarding the connection between LSD graphs and elementary graphs to expand this result to every LSD graph, proving our main result.

\section{Proof of the stability result}\label{section:stability}

We start by proving the two previously mentioned propositions: 

\begin{proposition}[Folklore]\label{prop:LSDToElementary}
Every linked and strongly dependent graph is spanned by an elementary graph.
\end{proposition}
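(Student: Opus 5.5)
The plan is to use LP duality to reduce the condition $\alpha^\ast(H)=\frac{v_H}{2}$ to the existence of a spanning subgraph of $H$ made of disjoint edges and odd cycles (a perfect $2$-matching), and then to use the hypothesis that $H$ is not a matching to upgrade this to an elementary spanning subgraph.

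\textbf{Step 1 (a perfect fractional matching).} The dual of the linear program defining $\alpha^\ast(H)$ is
\[
\min \sum_{e\in E(H)} y_e \quad \text{s.t.} \quad \sum_{e\ni v} y_e\ge 1 \ \ \forall v \in V(H), \qquad y\ge 0,
\]
that is, the fractional edge cover number. It is feasible because $H$ has no isolated vertices. By strong duality, an optimal $y$ has $\sum_e y_e=\frac{v_H}{2}$. Summing the constraints over all vertices gives
\[
\sum_{v}\sum_{e\ni v}y_e=2\sum_e y_e=v_H .
\]
Since each of the $v_H$ inner sums is at least $1$, every inner sum equals exactly $1$. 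Hence $y$ is a perfect fractional matching of $H$.

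\textbf{Step 2 (reducing the support).} Among all perfect fractional matchings of $H$, choose $y$ with minimal support $S=\{e: y_e>0\}$. The claim is that every component of $(V(H),S)$ is either a single edge or an odd cycle. This is the standard argument that the perfect fractional matching polytope has half-integral vertices, and I expect it to be the main obstacle, since it is the only step that is not purely combinatorial bookkeeping.

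Suppose the support graph contains an even cycle, or two odd cycles joined by a path, in which some edge is not forced to have value $1$. Put alternating weights $\pm\varepsilon$ along the even cycle; for two odd cycles joined by a path, use $\pm\varepsilon$ on the cycles and $\pm 2\varepsilon$ on the path. This perturbation preserves every vertex sum. Pushing $\varepsilon$ until some edge value reaches $0$ strictly shrinks the support, contradicting minimality.

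So each component of the support contains at most one cycle, and that cycle is odd. A component that is a tree with all vertex sums equal to $1$ must be a single edge: a leaf forces its edge to have value $1$, which in turn forces the neighbouring edges to $0$. A similar leaf-peeling argument shows that a unicyclic component with an odd cycle must be exactly that cycle, with all values $\frac12$.

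Consequently $H$ is spanned by a disjoint union $F$ of edges and odd cycles.

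\textbf{Step 3 (making $F$ elementary).} If $F$ contains an odd cycle, then it is not a matching, so $F$ is elementary and we are done. Otherwise $F=M$ is a perfect matching of $H$. Since $H$ is not a matching, there is an edge $uv\in E(H)\setminus M$. Let $uu',vv'\in M$ be the matching edges at $u$ and $v$. These are distinct edges: if $u'=v$, then $uv\in M$, a contradiction. So $u',u,v,v'$ are four distinct vertices and $u'uvv'$ is a path of length three in $H$. Replacing $uu'$ and $vv'$ by this path yields a spanning subgraph of $H$ consisting of one $P_3$ and edges. This subgraph is elementary, which proves \Cref{prop:LSDToElementary}.
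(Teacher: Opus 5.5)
Your proposal is correct and follows the same route as the paper: get a perfect fractional matching from $\alpha^\ast(H)=\frac{v_H}{2}$, pass to a spanning disjoint union of edges and odd cycles, and use an edge outside the perfect matching to form a $P_3$ when no odd cycle appears. The only difference is that you prove the two facts the paper cites from Scheinerman--Ullman (the duality step behind fractional Gallai, and the edge/odd-cycle decomposition via your minimal-support perturbation argument) instead of quoting them. When you write up Step 2, state that the path joining two odd cycles may have length zero, and that two distinct odd cycles sharing at least two vertices contain a theta subgraph and hence an even cycle.
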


\begin{proof}[Proof of \Cref{prop:LSDToElementary}]
The fractional version of Gallai’s theorem states that for any graph $H$, $\alpha^\ast(H) + \mu^\ast(H) = v_H$ (where $\mu^\ast(H)$ denotes the fractional-matching number of $H$). It is also known that a graph has a fractional perfect matching (that is, $\mu^\ast(H) = \frac{v_H}{2}$) if and only if its vertex set can be partitioned into disjoint sets $V_1,\dots,V_k$ such that for each $i$, $H[V_i]$ is either a copy of $K_2$ or it is spanned by an odd cycle. Proofs for both of those facts can be found in \cite[Theorems 1.2.1, 2.2.2, 2.2.8]{FractionalGraphTheory}. If we further assume that $H$ is not a matching, then either this collection contains an odd cycle, or two of the edges in the collection (denote $v_1v_2, v_3v_4$) are joined by an edge $v_2v_3$. These three edges form a copy of $P_3$. In either case, we found an elementary graph that spans $H$.
\end{proof}

\begin{proposition}[Folklore]\label{prop:spannedBound}
If a graph $H$ is spanned by a subgraph $H^{\prime}$, then for any graph $G$, $\nemb{H}{G}\leq\nemb{H^{\prime}}{G}$. If $G$ is a clique, then we have equality.
\end{proposition}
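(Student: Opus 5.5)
The plan is to compare the two sets of embeddings directly by inclusion. Here ``$H$ is spanned by $H^{\prime}$'' means $H^{\prime}$ is a subgraph of $H$ with $V(H^{\prime})=V(H)$ and $E(H^{\prime})\subseteq E(H)$. Both $\semb{H}{G}$ and $\semb{H^{\prime}}{G}$ are then sets of injective maps from the same vertex set $V(H)=V(H^{\prime})$ into $V(G)$. This common domain is exactly what makes the comparison possible.

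First I will show $\semb{H}{G}\subseteq\semb{H^{\prime}}{G}$. Take any $\varphi\in\semb{H}{G}$: it is injective on $V(H^{\prime})=V(H)$. For every edge $v_1v_2\in E(H^{\prime})$ we also have $v_1v_2\in E(H)$, so $\varphi(v_1)\varphi(v_2)\in E(G)$. Hence $\varphi$ is an embedding of $H^{\prime}$. Taking cardinalities gives $\nemb{H}{G}\leq\nemb{H^{\prime}}{G}$.

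For the equality case, let $G=K_n$. Every injective map $V(H)\to V(G)$ sends each pair of distinct vertices to a pair of distinct vertices, and that pair is adjacent in $K_n$. So every injective map is an embedding of both $H$ and $H^{\prime}$. Both sets therefore equal the set of all injections $V(H)\to V(K_n)$, and
\begin{align*}
\nemb{H}{K_n}=\nemb{H^{\prime}}{K_n}=\frac{n!}{(n-v_H)!}.
\end{align*}
The only subtle point is that ``spanning'' must preserve the vertex set; without it the two embedding sets would not even have the same domain. I expect no real obstacle beyond stating this convention explicitly.
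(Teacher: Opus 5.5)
Your proof is correct and follows the paper's argument: $V(H)=V(H^\prime)$ and $E(H^\prime)\subseteq E(H)$ give $\semb{H}{G}\subseteq\semb{H^\prime}{G}$, and in a clique every injection is an embedding of both graphs, so the two sets coincide. The explicit count $n!/(n-v_H)!$ is an extra that needs $n\geq v_H$ (otherwise both sets are empty), but the set equality you prove already gives the claim.
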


\begin{proof}[Proof of \Cref{prop:spannedBound}]
Assume that a graph $H$ is spanned by a subgraph $H^\prime$. Since $V(H)=V(H^\prime)$ and $E(H)\supseteq E(H^\prime)$, we have $\semb{H}{G} \subseteq \semb{H^\prime}{G}$. Notice also that if $G$ is a clique, then every injective mapping of the vertices of $H$ defines a valid embedding, and therefore $\semb{H}{G} \supseteq \semb{H^\prime}{G}$. The proposition immediately follows from those observations.
\end{proof}

Recall the stability results mentioned in the introduction, we now list them explicitly: 

M.\ Harel, F.\ Mousset, and W.\ Samotij proved in \cite[Claims 5.12]{UpperTailsViaHighMoments} the following:
\begin{proposition}\label{prop1:claim:path3}
    If $\nemb{P_3}{G} > \left(1-\varepsilon\right)\left(2e_G\right)^2$ for some positive $\varepsilon$, then $G$ has a subgraph with minimum degree at least $\left(1-2\varepsilon^{1/2}\right)\left(2e_G \right)^{1/2}$.
\end{proposition}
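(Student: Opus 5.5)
The plan is a peeling argument. Set $\delta \coloneqq \left(1-2\varepsilon^{1/2}\right)\left(2e_G\right)^{1/2}$ and repeatedly delete from $G$ any vertex whose degree in the current graph is less than $\delta$. If the process stops at a nonempty graph, that graph is the required subgraph of minimum degree at least $\delta$. So it suffices to assume the process deletes every vertex and derive $\nemb{P_3}{G} \leq \left(1-\varepsilon\right)\left(2e_G\right)^2$, contradicting the hypothesis.

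Under this assumption, order the vertices by deletion time and orient each edge from the endpoint deleted first to the one deleted later. Every vertex then has out-degree $d^+(v) < \delta$, while $\sum_v d^+(v) = e_G$. An embedding $a\,b\,c\,d$ of $P_3$ is determined by its middle edge $bc$ and its two pendant edges. I will use the crude bound
\begin{align*}
\nemb{P_3}{G} \leq \sum_{(b,c)\,:\, bc\in E(G)} d(b)\,d(c) = 2\sum_{bc\in E(G)} d(b)\,d(c),
\end{align*}
where the sum on the left runs over ordered pairs. In the second sum, each edge is written as $b\to c$ with $b$ deleted first. Then I split $d(b) = d^+(b) + d^-(b)$, with $d^+(b) < \delta$, and bound the two pieces separately:
\begin{enumerate}
\item The out-part is $\sum_b d^+(b)\sum_{c\in N^+(b)} d(c)$. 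For this I want a Cauchy--Schwarz or convexity estimate using $d^+ < \delta$ and $\sum_c d(c) = 2e_G$. The target is roughly $\delta \cdot e_G \cdot (2e_G)^{1/2}$-type terms, which combine to at most $\tfrac{\delta}{(2e_G)^{1/2}}\cdot\tfrac{(2e_G)^2}{2}$ up to lower-order terms.
\item The in-part should be handled the same way after swapping the roles of the two endpoints.
\end{enumerate}

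Summing the two pieces should give $\nemb{P_3}{G} \leq \left(\delta/(2e_G)^{1/2} + O(\varepsilon)\right)(2e_G)^2$, which is roughly $\left(1-2\varepsilon^{1/2}\right)(2e_G)^2 < (1-\varepsilon)(2e_G)^2$. That is the desired contradiction.

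The main obstacle is getting the sharp constant out of the sum $\sum_{bc} d(b)d(c)$. A naive bound such as $d(b)d(c) \leq \tfrac12\left(d(b)^2+d(c)^2\right)$ produces $\sum_v d(v)^3$, which is far too large. What I would try first is an asymmetric AM--GM: $d(b)d(c) \leq \tfrac12\left(t\,d(b)^2 + t^{-1} d(c)^2\right)$ with $t \approx (2e_G)^{1/2}/\delta$, applied only to the factor coming from the out-neighbourhood. This should exploit the fact that, along the orientation, each vertex "pays" for at most $\delta$ of its incident edges. If that fails, I would instead count the path directly: choose the directed edge $(a,b)$ in at most $2e_G$ ways, then choose $c$ among the out-neighbours of whichever of $b,c$ is deleted first, in fewer than $\delta$ ways, then choose $d$. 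I would also accept losing a constant factor in the exponent of $\varepsilon$, since only $1-2\varepsilon^{1/2} < 1-\varepsilon$ is needed at the end.
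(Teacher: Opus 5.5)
The paper gives no proof of this statement; it quotes it from Harel, Mousset and Samotij (their Claim 5.12), so your argument has to stand on its own. The reduction is fine. Peeling at threshold $\delta=(1-2\varepsilon^{1/2})(2e_G)^{1/2}$ empties $G$ exactly when no nonempty subgraph has minimum degree at least $\delta$.

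The gap is the counting step, and it is not merely unfinished. The bound you aim for is
$\nemb{P_3}{G}\le\bigl(\delta/(2e_G)^{1/2}+O(\varepsilon)\bigr)(2e_G)^2$ for graphs emptied by the peeling, and this bound is false. Here is a counterexample.

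Let $G$ be a clique $Q$ on $k$ vertices completely joined to an independent set $I$ of $\beta k$ vertices.
\begin{itemize}
\item A nonempty subgraph meeting $I$ has a vertex of degree at most $k$, and one inside $Q$ has minimum degree at most $k-1$. So any $\delta>k$ empties $G$.
\item Since $(2e_G)^{1/2}=(1+o(1))(1+2\beta)^{1/2}k$, you may take $\delta/(2e_G)^{1/2}$ arbitrarily close to $(1+2\beta)^{-1/2}=1-\beta+O(\beta^2)$. This corresponds to $\varepsilon\approx\beta^2/4$.
\item A pair of directed edges $(a,b),(c,d)$ fails to form a walk $abcd$ exactly when $b=c$ or $b,c\in I$.
\item $I$ carries a $\beta/(1+2\beta)+O(1/k)$ fraction of the total degree. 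Hence $\nemb{P_3}{G}=\bigl(1-\beta^2/(1+2\beta)^2-O(1/k)\bigr)(2e_G)^2$.
\end{itemize}
For small $\beta$ and large $k$ this exceeds $(1-\beta+O(\beta^2))(2e_G)^2$, contradicting your target.

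The loss forced by the degree condition is quadratic in the relative degree deficit, not linear. That is why the statement has $\varepsilon^{1/2}$, and this example shows the exponent $1/2$ cannot be improved. So there is no slack to give up. No choice of $t$ in your asymmetric AM--GM, and no Cauchy--Schwarz estimate of the out- and in-parts, can deliver a first-order gain.

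Your fallback count also bounds nothing. After choosing $c\in N^+(b)$, the vertex $d$ still ranges over all of $N(c)$, which the orientation does not control. When $c$ precedes $b$, there is no restriction on $c$ at all.

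What is needed is an estimate that is sharp to second order. A simple averaging argument gives one, with no orientation at all.
\begin{enumerate}
\item Put $\pi(v)=d(v)/2e_G$. Your crude bound $\nemb{P_3}{G}\le\sum_b d(b)\sum_{c\in N(b)}d(c)$ turns the hypothesis into $\sum_b\pi(b)\,\pi(V(G)\setminus N(b))<\varepsilon$.
\item Let $T$ be the set of vertices $b$ with $\pi(V(G)\setminus N(b))\le\varepsilon^{1/2}$. By Markov's inequality, $\pi(V(G)\setminus T)<\varepsilon^{1/2}$.
\item Hence every $b\in T$ has $S=N(b)\cap T$ with $\sum_{c\in S}d(c)>(1-2\varepsilon^{1/2})\,2e_G$.
\item Since $\sum_{c\in S}d(c)=2e(S)+e(S,V(G)\setminus S)\le e_G+e(S)$, this gives $\binom{|S|}{2}\ge e(S)>(1-4\varepsilon^{1/2})e_G$.
\item So, for $\varepsilon<1/16$, $G[T]$ is a nonempty subgraph of minimum degree greater than $(1-4\varepsilon^{1/2})^{1/2}(2e_G)^{1/2}=(1-2\varepsilon^{1/2}-O(\varepsilon))(2e_G)^{1/2}$.
\end{enumerate}
This is marginally weaker than the quoted constant. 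It already suffices for the paper's stability proposition, which after substituting $(\varepsilon/8)^2$ only uses minimum degree $(1-\frac12\varepsilon)(2e_G)^{1/2}$.
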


Using this result and Lemma 5.10 from the same paper, A.\ Basak and S.\ Karmakar proved the following in \cite[Lemma A.2]{UpperTailBoundsForIrregularGraphs}: 
\begin{proposition}\label{prop1:claim:regularGraph}
        For any connected regular graph $H$ and a host graph $G$, if $\nemb{H}{G} > \left(1-\varepsilon\right)\left(2e_G\right)^{v_H/2}$ for some $\varepsilon \geq e_G^{-\frac{1}{2}}$, then $G$ has a subgraph with minimum degree at least $\left(1- 4\varepsilon^{1/2}\right)\left(2e_G \right)^{1/2}$.
\end{proposition}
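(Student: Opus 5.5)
The plan is to reduce to \Cref{prop1:claim:path3}: show that many embeddings of $H$ force many embeddings of $P_3$, with an essentially linear loss in $\varepsilon$. Write $d$ for the degree of $H$ and $v=v_H$.

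If $d=1$ then $H=K_2$ and the hypothesis holds for every $G$; a perfect matching then shows the conclusion can fail. So, as in \cite{UpperTailBoundsForIrregularGraphs}, I assume $d\geq 2$. Then $H$ is connected, every vertex lies on at least two edges, and $H$ contains a copy of $P_3$. Throughout I work with homomorphism counts $\hom(\cdot,G)$ rather than embeddings. This is harmless: $\nemb{H}{G}\leq \hom(H,G)$, and the non-injective homomorphisms only matter in the $P_3$ count. There they number $O\left(\sum_x \deg(x)^2\right)=O\left(e_G^{3/2}\right)$, which is at most a $O(\varepsilon)$ fraction of $(2e_G)^2$ because $\varepsilon\geq e_G^{-1/2}$.

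\textbf{Step 1: a Hölder/entropy inequality.} This is the content of \cite[Lemma 5.10]{UpperTailsViaHighMoments}, which I would reprove. The target is
\begin{align*}
\hom(H,G)\leq \hom(P_3,G)^{\theta}\,(2e_G)^{\frac{v}{2}-2\theta}
\end{align*}
for some $\theta\geq 1/4$ depending only on $H$.

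The route I would try first is the Friedgut--Kahn entropy argument from \cite{TutorialLecturesOnEntropy}. Take a uniformly random homomorphism and order the vertices of $H$ along a spanning walk. Split the entropy of the image into two kinds of terms:
\begin{itemize}
\item terms attached to a fixed copy of $P_3$ in $H$ (a path $a\,b\,c\,e$), bounded by $\log \hom(P_3,G)$;
\item terms attached to the remaining vertices, bounded via the uniform fractional perfect matching $1/d$ on the edges of $H$ by Shearer's lemma, each edge contributing at most $\log(2e_G)$.
\end{itemize}
Averaging this over all placements of the $P_3$ spreads the weight $\theta$ evenly. Regularity guarantees that the leftover weights still form a fractional cover of total size $v/2-2\theta$.

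\textbf{Step 2: combine with the hypothesis.} From $\nemb{H}{G}>(1-\varepsilon)(2e_G)^{v/2}$, Step 1 gives
\begin{align*}
\hom(P_3,G)\geq (1-\varepsilon)^{1/\theta}(2e_G)^2\geq (1-\varepsilon/\theta)(2e_G)^2 .
\end{align*}
After the correction for non-injective maps, $\nemb{P_3}{G}>(1-\varepsilon')(2e_G)^2$ with $\varepsilon'=\varepsilon/\theta+O(e_G^{-1/2})\leq C\varepsilon$.

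\textbf{Step 3: apply \Cref{prop1:claim:path3}.} This yields a subgraph of minimum degree at least $(1-2\varepsilon'^{1/2})(2e_G)^{1/2}$. To get the stated $(1-4\varepsilon^{1/2})$ we need $\varepsilon'\leq 4\varepsilon$. This requires $\theta\geq 1/4$ and some care with the error term; $\varepsilon\geq e_G^{-1/2}$ is exactly what absorbs that error. If $\varepsilon$ is so large that $1-4\varepsilon^{1/2}\leq 0$, the conclusion is trivial.

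\textbf{Main obstacle.} The hard part is Step 1 with an explicit $\theta\geq 1/4$ that is uniform over all connected regular $H$. If the entropy bookkeeping does not give this directly, I would fall back on a cruder Hölder bound. Peel off one vertex $b$ of degree $d$ together with two of its neighbours. This gives $\hom(H,G)\leq \left(\sum_x \deg(x)^2\right)\cdot(2e_G)^{v/2-2}\cdot(\text{lower-order factor})$. Then use the identity $\sum_x\deg(x)^2=\hom(P_2,G)$ together with Cauchy--Schwarz, $\hom(P_3,G)\geq \hom(P_2,G)^2/(2e_G)$, which yields $\theta=1/2$. The regularity of $H$ is what makes the remaining factor bounded by $(2e_G)^{v/2-2}$.
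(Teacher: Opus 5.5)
Your architecture matches the route the paper cites (Lemma~5.10 and Claim~5.12 of \cite{UpperTailsViaHighMoments}): an entropy/H\"older inequality comparing $H$ with $P_3$, followed by \Cref{prop1:claim:path3}. You are also right that the statement as written fails for $H=K_2$.

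\textbf{The gap: $H=K_3$.} This is exactly the case the paper needs, since elementary graphs can have triangle components. Your claim that every connected $d$-regular $H$ with $d\ge 2$ contains a copy of $P_3$ is false for $K_3$, which has only three vertices. So Step~1 has no $P_3$-sets to feed into Shearer's lemma.

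The fallback does not rescue it, because $\hom(P_3,G)\ge \hom(P_2,G)^2/(2e_G)$ is false. For the star $K_{1,m}$ the left side is $2m^2$ and the right side is $m(m+1)^2/2$. Bounding triangles through $\sum_x\deg(x)^2$ loses too much.

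The missing idea is to use codegrees instead of degrees. Sum over the $2e_G$ ordered edges $(x,y)$ of $G$, and let $\operatorname{codeg}(x,y)=|N(x)\cap N(y)|$, which is at most $\min(\deg x,\deg y)-1$. Then $\nemb{K_3}{G}=\sum \operatorname{codeg}(x,y)$, and
$\nemb{P_3}{G}=\sum\left[(\deg x-1)(\deg y-1)-\operatorname{codeg}(x,y)\right]\ge \sum\operatorname{codeg}(x,y)^2-\nemb{K_3}{G}$.
By Cauchy--Schwarz, $\nemb{P_3}{G}\ge \nemb{K_3}{G}^2/(2e_G)-\nemb{K_3}{G}$. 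Combine this with the hypothesis and with $\nemb{K_3}{G}\le (2e_G)^{3/2}$ from \Cref{nogas}. You get $\nemb{P_3}{G}>\left(1-2\varepsilon-(2e_G)^{-1/2}\right)(2e_G)^2$. This is where $\varepsilon\ge e_G^{-1/2}$ is genuinely needed: it gives $\varepsilon'\le (2+2^{-1/2})\varepsilon<4\varepsilon$.

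\textbf{A second, fixable error: the homomorphism correction.} Non-injective homomorphisms of $P_3$ are not $O(e_G^{3/2})$ in general. In $K_{1,m}$ all $2m^2$ of them are non-injective, and you give no argument that the bound holds in the near-extremal regime. The simplest repair is to avoid homomorphisms altogether and run Shearer's lemma on a uniformly random \emph{embedding}, as the paper does in Case~3 of \Cref{claim:showAndHandleo1:claim1}. Restrictions of injective maps are injective, so no correction term appears.

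With that change, Step~1 works for $v_H\ge 4$ with room to spare.
For $C_\ell$ with $\ell\ge 4$, the cyclic windows give $\theta=\ell/4$.
For $d\ge 3$, use the following cover:
\begin{itemize}
\item start from weight $1/d$ on every edge;
\item give weight $w=1/(2d(d-1)^2)$ to every $4$-vertex path of $H$;
\item subtract $w$ from the two end edges of each such path.
\end{itemize}
Each edge is an end edge of at most $2(d-1)^2$ paths, so the weights stay nonnegative and every vertex is still covered exactly once. This yields $\theta\ge v_H(d-2)/(4(d-1))\ge 1/2$. With $\theta\ge 1/4$ and no error term, $(1-\varepsilon)^{1/\theta}\ge 1-4\varepsilon$, and \Cref{prop1:claim:path3} then gives exactly the constant $4$.
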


Using those results, we obtain the following stability proposition:
\begin{proposition}\label{stability}
The following holds for every elementary graph $H$, $\varepsilon>0$, and sufficiently large $n$: If $G$ is an $\binom{n}{2}$-extremal graph for $H$, then $G$ contains a (nonempty) subgraph with minimum degree at least $(1-\varepsilon)n$.
\end{proposition}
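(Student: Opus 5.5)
Fix an elementary graph $H$ with nice decomposition into odd cycles $C_1,\dots,C_{k_c}$, paths of length three $P_1,\dots,P_{k_p}$ and edges $e_1,\dots,e_{k_e}$, where $k_c+k_p\ge 1$. Let $G$ be $\binom{n}{2}$-extremal for $H$ and put $m=e_G=\binom n2$, so that $(2m)^{1/2}=(n^2-n)^{1/2}=(1-O(1/n))n$. The plan is to show that some single component $F$ of $H$ that is a $P_3$ or an odd cycle has $\nemb{F}{G}\ge(1-\delta)(2m)^{v_F/2}$ with $\delta\to 0$. We then apply \Cref{prop1:claim:path3} (if $F=P_3$) or \Cref{prop1:claim:regularGraph} (if $F$ is an odd cycle, a connected $2$-regular graph). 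These give a subgraph of minimum degree at least $(1-4\delta^{1/2})(2m)^{1/2}\ge(1-\varepsilon)n$ once $\delta$ is small and $n$ is large.

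\textbf{Lower bound from extremality.} Since $G$ is extremal, $\nemb{H}{G}\ge\nemb{H}{K_n}=n(n-1)\cdots(n-v_H+1)=(1-O(1/n))(2m)^{v_H/2}$.

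\textbf{Upper bound by components.} An embedding of $H$ restricts to an embedding of each component, so dropping the injectivity requirement between components gives
\begin{align*}
\nemb{H}{G}\le\prod_{F\in\concom{H}}\nemb{F}{G}.
\end{align*}
Each factor satisfies $\nemb{F}{G}\le(2m)^{\alpha^\ast(F)}=(2m)^{v_F/2}$ by \Cref{nogas}, using that every component of an elementary graph is strongly dependent: odd cycles, $P_3$ and $K_2$ all have perfect fractional matchings. For an edge the factor is exactly $2m$.

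\textbf{Extracting a good component.} Choose any component $F_0$ that is a cycle or a $P_3$; one exists because $k_c+k_p\ge1$. Dividing the two bounds and bounding every other factor by its Alon bound gives
\begin{align*}
\nemb{F_0}{G}\ge(1-O(1/n))(2m)^{v_{F_0}/2},
\end{align*}
so $\delta=O(1/n)$. The hypothesis $\varepsilon\ge e_G^{-1/2}$ in \Cref{prop1:claim:regularGraph} is satisfied by taking $\delta$ there to be a fixed small constant (say $\varepsilon^2/64$) instead of $O(1/n)$, valid for $n$ large. \Cref{prop1:claim:path3} has no such restriction.

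\textbf{Main obstacle.} The key point is the exact constant $2^{v_F/2}$ in the upper bound of \Cref{nogas}. If that inequality carried any constant larger than $1$, the division step would lose a constant factor and the stability propositions would not apply. The argument therefore relies on the sharp form $\nemb{F}{G}\le(2e_G)^{\alpha^\ast(F)}$ as stated, since the plan depends on the upper bound being tight up to $1+o(1)$. The remaining steps are routine asymptotics.
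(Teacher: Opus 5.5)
Your proposal is correct and is essentially the paper's proof. You lower-bound $\nemb{H}{G}$ by $(n)_{v_H}$, split off a $P_3$ or odd-cycle component $C$, and bound the rest by Alon's sharp bound $(2e_G)^{(v_H-v_C)/2}$. Doing that component by component rather than for $H-C$ at once makes no difference. You then apply \Cref{prop1:claim:path3} or \Cref{prop1:claim:regularGraph} with the fixed constant $\varepsilon^2/64=(\varepsilon/8)^2$, which respects the $e_G^{-1/2}$ threshold, exactly as the paper does.
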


\begin{proof}[Proof of \Cref{stability}]
For our case, notice that any cycle is a regular graph. 
Let $H$ be an elementary graph and let $n_0\coloneqq n_0(H, \varepsilon)$ be a constant depending only on $H,\varepsilon$ to be chosen later. Assume that $G$ is a $\binom{n}{2}$-extremal graph for $H$ for some $n>n_0$. We wish to show that $G$ contains a subgraph with minimum degree at least $(1-\varepsilon)n$.
Since $H$ is elementary it contains a copy of $P_3$ or an odd cycle. Denote such a component by $C$. Notice that
\begin{align*}
    (n)_{v_H}&=\nemb{H}{K_n}\\
    & \leq\nemb{H}{G} \\
    & \leq\nemb{C}{G}\nemb{H-C}{G} \\
    & \leq\nemb{C}{G}(2e_G)^{\frac{v_H-v_C}{2}}. \\
    \nemb{C}{G} & \geq \frac{ (n)_{v_H}}{(2e_G)^{\frac{v_H-v_C}{2}}}.
\end{align*}
Since $\frac{ (n)_{v_H}}{(2e_G)^{\frac{v_H}{2}}}$ goes to one as $n$ goes to infinity, there exists $n_1$ such that if $n>n_1$ then  $\frac{(n)_{v_H}}{(2e_G)^{\frac{v_H-v_C}{2}}}>(1-\left(\frac{1}{8}\varepsilon\right)^2)(2e_G)^{\frac{v_C}{2}}$. Thus by either \Cref{prop1:claim:path3} if $C$ is a copy of $P_3$, or \Cref{prop1:claim:regularGraph} if $C$ is an odd cycle, $G$ has a subgraph with minimum degree at least $(1-\frac{1}{2}\varepsilon)(2e_G)^{1/2}>(1-\varepsilon)n$ (for $n>n_0\coloneqq\max(n_1,\frac{1}{2\varepsilon},2,n_2)$ where $n_2$ is the minimal $k$ for which $\left(\frac{1}{8}\varepsilon\right)^2 > \binom{k}{2}^{-\frac{1}{2}}$).
\end{proof}

\section{Proof of the main result}\label{section:ProofOfMain}

As mentioned in \Cref{section:proofOutline}, our first lemma shows that an extremal graph either has high minimum degree, or it consists of a large clique and a set of vertices loosely connected to it. Stating this formally:
\begin{lemma}\label{separation}
Suppose that $H$ is an elementary graph, $c<1$ is some constant, $\varepsilon>0$ is sufficiently small, $n$ is sufficiently large, and $G$  is an $\binom{n}{2}$-extremal graph for $H$. Then one of the following holds:
\begin{enumerate}
\item 
    The minimum degree of $G$ is at least ${c}\cdot n$.
\item  
    $G$ contains a clique $G^{\prime}$ of order at least $\left(1-\varepsilon^4\right)n$, and for every vertex $v$ outside of $G^{\prime}$ we have $\left|N(v)\cap V(G^{\prime})\right|\leq (1-\varepsilon)n$.
\end{enumerate}
\end{lemma}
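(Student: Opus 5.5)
We argue by contradiction and by edge shifting. Assume $G$ is $\binom{n}{2}$-extremal for $H$ with $n$ large, and suppose (1) fails, so some vertex $u$ has $\deg(u)<cn$. By \Cref{stability}, applied with a parameter $\delta\ll\varepsilon$, $G$ contains a subgraph $D$ with minimum degree at least $(1-\delta)n$. We take $D$ maximal, so that $D$ is the $(1-\delta)n$-core of $G$. Since $e_G=\binom{n}{2}$ and every vertex of $D$ has degree at least $(1-\delta)n$, $D$ has at most about $(1+\delta)n$ vertices. We will show that $D$ is essentially a clique on almost all of the vertex set, and that vertices outside it are sparsely attached, which is (2).

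\textbf{Step 1: shifting edges.} Let $u$ have low degree. Delete all edges at $u$ and add the same number of missing edges inside $D$ (or between $D$ and a vertex of high degree into $D$), provided enough non-edges exist there. We compare embeddings.
\begin{itemize}
\item Each deleted edge $uw$ lies in at most $\deg(u)\cdot O(n^{v_H-3})$ embeddings. This uses the degree bound on $u$ together with the trivial $O(n^{v_H-2})$ count for the rest of $H$.
\item Each added edge $xy$ with $x,y\in D$ lies in at least $\Omega(n^{v_H-2})$ new embeddings. For a nice decomposition of $H$, map an edge of $H$ onto $xy$ and extend greedily using the minimum degree $(1-\delta)n$ of $D$ and $|D|\approx n$.
\end{itemize}
Adding edges only creates embeddings, so interactions between the additions cost nothing. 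The deletion loss summed over all edges at $u$ is at most $\deg(u)^2\cdot O(n^{v_H-3})$, up to constants; a sharper count of embeddings through $u$ gives $O(\deg(u)\,n^{v_H-2}\cdot c)$. The gain is $\deg(u)\cdot\Omega(n^{v_H-2})$ with a larger constant, because the new edges sit in a nearly complete part. For $c<1$ fixed and $\delta$ small, the gain should strictly exceed the loss. That contradicts extremality.

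It follows that, whenever a low-degree vertex exists, $D$ has fewer than $\deg(u)$ missing edges. Comparing constants carefully should show that $D$ is a clique outright, since even a single missing edge in $D$ lets us shift one edge from $u$ into $D$ with net gain.

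\textbf{Step 2: deriving (2).} With $G'=D$ a clique, edge counting gives $|V(G')|\ge(1-\delta)n$. We choose $\delta\le\varepsilon^4$ in the application of \Cref{stability}, which gives order at least $(1-\varepsilon^4)n$. For the sparse-connection condition, suppose some $v\notin G'$ had more than $(1-\varepsilon)n$ neighbours in $G'$. Then $G'\cup\{v\}$ is nearly complete, and a single-edge shift of the same type moves an edge from the low-degree $u$ into the missing pair between $v$ and $G'$. That is again a net gain, contradicting extremality. Alternatively, this also contradicts the choice of $D$ as a maximal dense structure once $\delta<\varepsilon$.

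\textbf{Main obstacle.} The hardest step is the quantitative comparison in Step 1. A single edge at a low-degree vertex still lies in $\Theta(n^{v_H-2})$ embeddings, because the remaining components of $H$ can be placed anywhere. So the loss and gain per edge are of the same order, and we must compare the leading constants. The loss for edge $uw$ is governed by how many embeddings use $uw$ inside a $P_3$ or odd-cycle component: this is about $\deg(u)$ or $\deg(w)$ times $n^{v_H-3}$, at most $cn^{v_H-2}$. The same component through a new edge in $D$ gives about $(1-\delta)n^{v_H-2}$. Edges of $H$ lying in $K_2$ components contribute equally on both sides and cancel. We would therefore count embeddings where the image of $uw$ lies in a non-edge component and show that this part is at least as large for the new edge. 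That reduces the comparison to the factor $c$ versus $1-\delta$, which is exactly where $c<1$ is used.
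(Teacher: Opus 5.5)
Your overall route is the paper's. You get a dense part from \Cref{stability}, and if some $u$ has $\deg(u)<cn$, you move one edge of $u$ onto a non-edge of the dense part and compare the leading constants of loss and gain. The gap is in that comparison, which is the whole content of the lemma and which you yourself flag as the main obstacle. Your first bullet in Step~1 is false, as you later concede. The repair in your last paragraph is also false: the number of embeddings using $uw$ inside a $P_3$-component is \emph{not} at most $c\,n^{v_H-2}$. When $u$ is an endpoint of the image of the path and $uw$ is its end edge, the remaining two vertices form an essentially arbitrary directed edge. These placements alone give about $2\cdot 2m\cdot n^{v_H-4}\approx 2n^{v_H-2}$ embeddings, regardless of $\deg(u)$, so the comparison does not reduce to ``$c$ versus $1-\delta$''. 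What is true is \Cref{maxEmbeddingsIncludingV}. Per edge at $u$, a $P_3$-component costs about $(2+4\deg(u)/n)n^{v_H-2}$ against a gain of $6n^{v_H-2}$, because only placements with $u$ an interior vertex of the path are penalized. A $C_\ell$-component costs about $2\ell\deg(u)n^{v_H-3}$ against $2\ell n^{v_H-2}$. In total the loss is at most $2n^{v_H-2}\bigl(e_H-\beta(1-\deg(u)/n)+O(\varepsilon)\bigr)$ with $\beta=\sum_i e_{C_i}+2k_p>0$, while the gain is at least $2e_Hn^{v_H-2}(1-O(\varepsilon))$. The conclusion survives because $\beta>0$, not for the reason you give.

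Two further ingredients are missing. First, for a fixed edge $uw$, the paths in which $uw$ is the middle edge number up to $\deg(u)\deg(w)$, and at this stage nothing bounds $\deg(w)$ by $O(n)$. So ``each deleted edge'' bounds can fail. The paper instead removes the cheapest edge at $u$, bounding its cost by the average over the edges at $u$. It also counts paths with $u$ interior by splitting on whether the far edge lies in the dense part, which has at most $(1+2\varepsilon)n$ vertices and only $O(\varepsilon n^2)$ edges outside it. Second, your ``alternatively'' argument for the attachment condition is wrong: a vertex with between $(1-\varepsilon)n$ and $(1-\delta)n$ neighbours in the $(1-\delta)n$-core does not contradict the core's maximality. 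Your first argument can be made to work once the loss estimate is fixed: shift an edge of $u$ onto a missing pair $vx$, and note that if $v$ is complete to $D$, maximality is contradicted. The paper avoids this by closing $V(\bar G)$ under adding vertices with at least $(1-\varepsilon)n$ neighbours. The attachment condition then holds by construction, and only the case ``$G'$ is not a clique'' remains.
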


\begin{proof}[Proof of \Cref{separation}]
We assume that $\varepsilon<\varepsilon_0$ and $n>n_0$ where $\varepsilon_0 \coloneqq \varepsilon_0(H, c) < 1-c, n_0\coloneqq n_0(H, \varepsilon, c)$ are some constants depending on $H$ to be chosen later. Denote $m=\binom{n}{2}, \delta=\delta(G)$.

By \Cref{stability} there exists $n_1$ such that if $n>n_1$, then $G$ contains a nonempty subgraph $\bar{G}$ with minimum degree at least $(1-\varepsilon^4)n$. Let $V$ be the set obtained from $V(\bar{G})$ by repeatedly adding any vertex $v$ for which $|N(v)\cap V|\geq(1-\varepsilon)n$, and let $G^{\prime}$ be the subgraph of $G$ induced by $V$. Notice that $G^{\prime}$ has a minimum degree at least $(1-\varepsilon)n$, order at least $(1-\varepsilon^4)n$ and that for any vertex $v$ outside of $G^\prime$ we have $|N(v)\cap V(G^\prime)|<(1-\varepsilon)n$. If $G^{\prime}$ is a clique, then the second case of the lemma holds and we are done. From now on we will assume that $G^{\prime}$ is not a clique. 

Let $v$ be a vertex of minimum degree in $G$, that is, $d(v)=\delta$. If $v\in V(G^\prime)$, then $\delta>(1-\varepsilon)n\geq{c}n$ and we are done. So we can assume that $v\notin V(G^\prime)$. We wish to show that $\delta\geq cn$ in this case as well. To do so we will show that if $\delta$ is small, then we can obtain a graph with more embeddings of $H$ by moving one of the edges incident to $v$ to $G^\prime$. This contradicts the assumption that $G$ is extremal for $H$, and thus gives us a lower bound on $\delta$. 

Denote
\begin{equation*}
A = \min_{e\in E\left(G\right) :v\in e}|\{\varphi\in\semb{H}{G}: e\in \varphi(H)\}|
\end{equation*}

\begin{equation*}
B= \max_{e\in \binom{V(G^\prime)}{2}\setminus E(G^\prime)}\nemb{H}{G^\prime\cup e}-\nemb{H}{G^\prime}
\end{equation*}
That is, $A$ is the minimal number of embeddings we lose by removing some edge incident to $v$, and $B$ is the maximal number of embeddings in $G^\prime$ we gain by adding an edge to it. 
Let $C_1,\dots,C_{k_c},P_1,\dots,P_{k_p},e_1,\dots,e_{k_e}$ be a nice decomposition of $H$.

\begin{claim}\label{maxEmbeddingsIncludingV}
$A \leq 2n^{v_H-2}\left(e_H+\left(\delta/n - 1 + O(\varepsilon) \right)\beta\right)$ where $\beta = \beta(H)$ is a positive constant that depends only on $H$. 
\end{claim}

\begin{proof}[{Proof of \Cref{maxEmbeddingsIncludingV}}]
We start by counting embeddings that include edges incident to $v$ (including multiplicities, i.e.\ if an embedding includes two edges incident to $v$, we will count it twice). Denote this number by $S$ and note that $S/\delta$ is the number of embeddings that an average edge incident to $v$ participates in, which trivially upper bounds $A$. We have
\noindent
\begin{align}
    S&=\sum_{\varphi\in\semb{H}{G}}|\{e\in \varphi(H): v\in e\}| \label{l1c1e31}\\
    &=\sum_{C\in\concom{H}}\sum_{\varphi\in\semb{H}{G}}|\{e\in \varphi(C): v\in e\}| \label{l1c1e32}\\
    &\leq\sum_{C\in\concom{H}}\sum_{\varphi\in\semb{C}{G}}|\{e\in \varphi(C):v\in e\}|\cdot\nemb{H-C}{G} \label{l1c1e33}\\ 
    &\leq\sum_{C\in\concom{H}}\sum_{\varphi\in\semb{C}{G}}|\{e\in \varphi(C): v\in e\}|\cdot(2m)^{\frac{v_H-v_{C}}{2}}, \label{l1c1e34}
\end{align}
where we get \cref{l1c1e32} by separating $H$ to its components, \cref{l1c1e33} by symmetry, and \cref{l1c1e34} by \Cref{nogas} and the fact that $H-C$ is strongly dependent.
We are left with bounding the inner sum in \cref{l1c1e34}. We split into cases depending on the type of $C$.

\medskip
\noindent\textbf{Case 1} $C$ is a cycle of length $\ell$:

We bound the number of cycles of length $\ell$ which contain $v$. Each of those cycles must contain exactly two edges incident to $v$.
There are $\ell$ ways to choose the location of $v$ in the cycle,
there are $\delta$ ways to choose the vertex which comes before it and $\delta-1$ ways to choose the vertex which comes after it. To complete the rest of the cycle we need to specify an ordered path with $\ell-3$ vertices (which is either a path of odd length or an empty graph). Thus by  \Cref{nogas} there are at most $(2m)^{\frac{\ell-3}{2}}$ ways to complete the cycle. This means that \[\sum_{\varphi\in\semb{C}{G}}|\{e\in \varphi(C): v\in e\}| \leq 2\ell\delta^2 n^{\ell-3}.\]

\medskip
\noindent\textbf{Case 2} $C$ is a path of length $3$ (Illustrated in \Cref{fig:CountingPathsThroughVFloat}):

We bound the number of paths of length three which contain $v$. Some of those paths might contain two edges incident to $v$, while others contain only one.

\noindent If $v$ is the first or last vertex in the path: There are two ways to choose its location in the path and $\delta$ ways to choose the edge which is incident to it in the path. We then need a single disjoint (directed) edge to complete the path, for which there are at most $2m$ choices. Each of those paths contains only one edge incident to $v$.

\noindent If $v$ is in the middle of the path: There are two ways to choose the location of $v$ in the path. Denote the edges incident to $v$ in the path by $e_{out}$ and $e_{middle}$, where $e_{middle}$ is the middle edge of the path. Denote the last edge $e_{far}$. If $e_{far}$ is outside of $G^{\prime}$, then we have $\delta$  choices for $e_{out}$ and $2(m-\frac{(1-\varepsilon^4)n\cdot(1-\varepsilon)n}{2})\leq4\varepsilon m$ choices for $e_{far}$. Those two are enough to specify a single path. If $e_{far}$ is in $G^{\prime}$, we have $\delta$  choices for $e_{out}$, $\delta-1$ for $e_{middle}$ (given $e_{out}$) and at most $(1+2\varepsilon)n$ choices for $e_{far}$ (given $e_{out}, e_{middle}$) since its other endpoint is in $G^{\prime}$ (which has at most $2\binom{n}{2}/\delta(G^\prime)\leq\frac{n}{(1-\varepsilon)}\leq(1+2\varepsilon)n$ vertices). In each of those embeddings there are exactly two edges incident to $v$.

\noindent Summing both types, we get that 
\begin{align*}
    \sum_{\varphi\in\semb{C}{G}}|\{e\in \varphi(C): v\in e\}| & \leq 2\delta\cdot 2m + 2\cdot2(\delta\cdot4\varepsilon m + \delta^2(1+2\varepsilon)n) 
    \\
    & \leq 2\delta n^{2}\left(2\delta/n+ 1 + O(\varepsilon)\right).
\end{align*}

\noindent\textbf{Case 3} $C$ is an edge: 

We bound the number of directed edges which contain $v$. There are exactly $2\delta$ of those, and each of them contains exactly one edge incident to $v$. Thus 
\begin{align*}
\sum_{\varphi\in\semb{C}{G}}|\{e\in \varphi(C): v\in e\}| = 2\delta.
\end{align*}

\begin{figure}
    \centering
    \begin{adjustbox}{max width=\textwidth, keepaspectratio, trim=1.3cm 0cm 0cm 0cm, clip}
    \def\nodeSize{0.15cm}

\def\nodeInG#1{\draw[fill={rgb,255:red,218; green,232; blue,252},draw={rgb,255:red,108; green,142; blue,191}] (#1) ellipse (\nodeSize);}
\def\nodeOutG#1{\draw[fill={rgb,255:red,255; green,242; blue,204},draw={rgb,255:red,214; green,182; blue,86}] (#1) ellipse (\nodeSize);}
\def\nodeUnknown#1{\draw[fill={rgb,255:red,213; green,232; blue,212},draw={rgb,255:red,130; green,179; blue,102}] (#1) ellipse (\nodeSize);}

\def\edge#1#2{\draw[draw={rgb,255:red,130; green,179; blue,102},line width=0.1484cm] (#1) -- (#2);}
\def\dottedEdge#1#2{\draw[draw={rgb,255:red,128; green,128; blue,128},line width=0.1484cm, dotted] (#1) -- (#2);}

\def\lineWidth{0.15cm}

\def\PathThree#1#2#3#4#5#6#7{}

\def\textCentered#1#2{\node[font=\footnotesize, align=center, anchor=north] at (#1) {#2};}
\def\textWest#1#2{\node[font=\footnotesize, align=center, anchor=west] at (#1) {#2};}

\begin{tikzpicture}[x=0.04cm,y=-0.03cm]

\draw[draw={rgb,255:red,130; green,179; blue,102},line width=0.1484cm] (50,80) -- (140,80);
\draw[draw={rgb,255:red,128; green,128; blue,128},line width=0.1484cm, dotted] (140,80) -- (230,80);
\draw[draw={rgb,255:red,130; green,179; blue,102},line width=0.1484cm] (230,80) -- (320,80);

\draw[draw={rgb,255:red,130; green,179; blue,102},line width=0.1484cm] (50,160) -- (140,160);
\draw[draw={rgb,255:red,128; green,128; blue,128},line width=0.1484cm, dotted] (140,160) -- (230,160);
\draw[draw={rgb,255:red,130; green,179; blue,102},line width=0.1484cm] (230,160) -- (320,160);

\draw[draw={rgb,255:red,130; green,179; blue,102},line width=0.1484cm] (50,240) -- (140,240);
\draw[draw={rgb,255:red,130; green,179; blue,102},line width=0.1484cm] (140,240) -- (230,240);
\draw[draw={rgb,255:red,130; green,179; blue,102},line width=0.1484cm] (230,240) -- (320,240);

\draw[fill={rgb,255:red,218; green,232; blue,252},draw={rgb,255:red,108; green,142; blue,191}] (50,10) ellipse (0.15cm and 0.15cm);
\draw[fill={rgb,255:red,255; green,242; blue,204},draw={rgb,255:red,214; green,182; blue,86}] (145,10) ellipse (0.15cm and 0.15cm);
\draw[fill={rgb,255:red,213; green,232; blue,212},draw={rgb,255:red,130; green,179; blue,102}] (240,10) ellipse (0.15cm and 0.15cm);

\draw[fill={rgb,255:red,255; green,242; blue,204},draw={rgb,255:red,214; green,182; blue,86}] (50,80) ellipse (0.15cm and 0.15cm);
\draw[fill={rgb,255:red,213; green,232; blue,212},draw={rgb,255:red,130; green,179; blue,102}] (140,80) ellipse (0.15cm and 0.15cm);
\draw[fill={rgb,255:red,213; green,232; blue,212},draw={rgb,255:red,130; green,179; blue,102}] (230,80) ellipse (0.15cm and 0.15cm);
\draw[fill={rgb,255:red,213; green,232; blue,212},draw={rgb,255:red,130; green,179; blue,102}] (320,80) ellipse (0.15cm and 0.15cm);

\draw[fill={rgb,255:red,213; green,232; blue,212},draw={rgb,255:red,130; green,179; blue,102}] (50,160) ellipse (0.15cm and 0.15cm);
\draw[fill={rgb,255:red,255; green,242; blue,204},draw={rgb,255:red,214; green,182; blue,86}] (140,160) ellipse (0.15cm and 0.15cm);
\draw[fill={rgb,255:red,255; green,242; blue,204},draw={rgb,255:red,214; green,182; blue,86}] (230,160) ellipse (0.15cm and 0.15cm);
\draw[fill={rgb,255:red,255; green,242; blue,204},draw={rgb,255:red,214; green,182; blue,86}] (320,160) ellipse (0.15cm and 0.15cm);

\draw[fill={rgb,255:red,213; green,232; blue,212},draw={rgb,255:red,130; green,179; blue,102}] (50,240) ellipse (0.15cm and 0.15cm);
\draw[fill={rgb,255:red,255; green,242; blue,204},draw={rgb,255:red,214; green,182; blue,86}] (140,240) ellipse (0.15cm and 0.15cm);
\draw[fill={rgb,255:red,218; green,232; blue,252},draw={rgb,255:red,108; green,142; blue,191}] (230,240) ellipse (0.15cm and 0.15cm);
\draw[fill={rgb,255:red,218; green,232; blue,252},draw={rgb,255:red,108; green,142; blue,191}] (320,240) ellipse (0.15cm and 0.15cm);

\node[font=\footnotesize, align=center, anchor=west] at (10,40) {First Type:};
\node[font=\footnotesize, align=center, anchor=west] at (10,115) {Second Type, $e_{far}\notin E(G^\prime)$:};
\node[font=\footnotesize, align=center, anchor=west] at (10,195) {Second Type, $e_{far}\in E(G^\prime)$:};

\node[font=\footnotesize, align=center, anchor=west] at (55,10) {In $G^\prime$};
\node[font=\footnotesize, align=center, anchor=west] at (150,10) {Out of $G^\prime$};
\node[font=\footnotesize, align=center, anchor=west] at (245,10) {Not determined};

\node[font=\footnotesize, align=center, anchor=north] at (95,48) {$e_{out}$\\($\delta$ choices)};
\node[font=\footnotesize, align=center, anchor=north] at (185,48) {$e_{middle}$\\(given by $e_{out}$, $e_{far}$)};
\node[font=\footnotesize, align=center, anchor=north] at (275,48) {$e_{far}$\\($\leq2m$ choices)};
\node[font=\footnotesize, align=center, anchor=north] at (50,73.1) {$v$};

\node[font=\footnotesize, align=center, anchor=north] at (95,128) {$e_{out}$\\($\delta$ choices)};
\node[font=\footnotesize, align=center, anchor=north] at (185,128) {$e_{middle}$\\(given by $e_{out}$, $e_{far}$)};
\node[font=\footnotesize, align=center, anchor=north] at (275,128) {$e_{far}$\\($\leq4\varepsilon m$ choices)};
\node[font=\footnotesize, align=center, anchor=north] at (140,153.1) {$v$};

\node[font=\footnotesize, align=center, anchor=north] at (95,208) {$e_{out}$\\($\delta$ choices)};
\node[font=\footnotesize, align=center, anchor=north] at (185,208) {$e_{middle}$\\($\delta-1$ choices)};
\node[font=\footnotesize, align=center, anchor=north] at (275,208) {$e_{far}$\\($\leq(1+2\varepsilon)n$ choices)};
\node[font=\footnotesize, align=center, anchor=north] at (140,233.1) {$v$};

\end{tikzpicture}
    \end{adjustbox}
    \caption{Counting paths which include $v$}
    \label{fig:CountingPathsThroughVFloat} 
\end{figure}

Combining all of the above, and using $2m\leq n^2, \delta\leq n$ we get: 
\begin{equation*}
S \leq \sum_{i=1}^{k_c}e_{C_i}2\delta^2n^{v_H-3}+k_p2\delta \left(2\delta/n + 1 + O(\varepsilon)\right) n^{v_H-2} + k_e2\delta n^{v_H-2}.
\end{equation*}
We conclude that
\begin{align*}
A\leq S/\delta \leq & \sum_{i=1}^{k_c}\left(e_{C_i}2\delta n^{v_H-3}\right)+k_p 2 \left(2\delta/n+ 1 + O(\varepsilon)\right) n^{v_H-2} + k_e2n^{v_H-2}
\\
= & 2n^{v_H-2}\left(\sum_{i=1}^{k_c}\frac{e_{C_i}\delta}{n}+k_p\left(2\delta/n+ 1 + O(\varepsilon)\right) + k_e\right)
\\
= & 2n^{v_H-2}\left(\frac{\delta}{n}\sum_{i=1}^{k_c}e_{C_i}+2 k_p\left(\delta/n + O(\varepsilon)\right)+ k_p + k_e\right)
\\ 
\leq & 2n^{v_H-2}\left(e_H+\left(\delta/n - 1 + O(\varepsilon)\right)\left(\sum_{i=1}^{k_c}e_{C_i}+2k_p\right)\right).
\end{align*}
Where the last inequality holds because $e_H = \sum_{i=1}^{k_c}e_{C_i} + 3k_p + k_e$ and $k_p+k_c\geq 1$. Since $H$ is elementary,  $\beta(H)\coloneqq\left(\sum_{i=1}^{k_c}e_{C_i}+2k_p\right)>0$, this completes the proof of \Cref{maxEmbeddingsIncludingV}.

\end{proof}
\begin{claim}\label{minEmbeddingsIncludingEdgeInGprime}
    $B \geq 2e_H((1-\gamma\varepsilon)n-v_H)^{v_H-2}$ where $\gamma=\gamma(H)$ is a positive constant depending only on $H$.
\end{claim}

\begin{proof}[Proof of \Cref{minEmbeddingsIncludingEdgeInGprime}]
\noindent Let $W$ be a set of vertices in $G^\prime$. We define $F(W)$ as the set of common neighbors of $W$, i.e.\ $\bigcap_{v\in W}N(v)\setminus W$.
We know that the minimum degree of $G^\prime$ is at least $(1-\varepsilon)n$, and that $v_{G^\prime}\leq\frac{n}{1-\varepsilon} \leq (1+2\varepsilon)n$ (for $\varepsilon < \frac{1}{2}$). Therefore for every vertex $v$ in $W$: $|V(G^\prime)\setminus N(v)| \leq (1+2\varepsilon)n-(1-\varepsilon)n=3\varepsilon n$. This implies that $|\bigcup_{v\in W}V(G^\prime)\setminus N(v)|\leq3|W|\varepsilon n$ and therefore
\begin{equation}\label{l1c2e1}
|F(W)|\geq v_{G^\prime}-\left|\bigcup_{v\in W} V(G^\prime)\setminus N(v)\right| - |W| \geq (1-(3|W| + 1)\varepsilon)n - |W|.
\end{equation}

Let $e\coloneqq u w$ be a missing edge in $G^\prime$. We will now give a lower bound for the number of embeddings of $H$ in $G^\prime\cup\{e\}$ which contain $e$. We specify such an embedding $\varphi$ in the following way: Start by choosing which edge $v_1v_2\in E(H)$ (and in which orientation) will go to $e$. We then enumerate the rest of the vertices in $V(H)$ as $v_3,\dots,v_{v_H}$ and repeatedly choose the image of the $i$th vertex from $F(\{\varphi(v_1),\cdots,\varphi(v_{i-1})\})$. From the definition of $F$, $\varphi$ is indeed an embedding. There are $2e_H$ ways to choose the preimage of $e$. From \cref{l1c2e1} there are at least $((1-(3(i+2)+1)\varepsilon)n-(i+2))$ ways to choose the image of the $i$th vertex. Thus:
\begin{align*}
    B & \geq 2e_H\prod_{i=0}^{v_H-3}((1-(3(i+2)+1)\varepsilon)n-(i+2))
    \\
     & \geq2e_H((1-(3v_H+1)\varepsilon)n-v_H)^{v_H-2}.
\end{align*}
We choose $\gamma =(3v_H+1)$ and this completes the proof of \Cref{minEmbeddingsIncludingEdgeInGprime}.
\end{proof}

We now show the lower bound for $\delta$. There exists an edge $e_1$ incident to $v$ and a non-edge $e_2$ in $G^\prime$ such that $\nemb{H}{(G\setminus\{e_1\})\cup\{e_2\}}\geq\nemb{H}{G}-A+B$. This is true because $B$ only counts new embeddings in $G^\prime$ and the edge we removed was in $E(G)\setminus E(G^\prime)$. Since we assumed that $G$ is extremal for $H$, we get $B\leq A$. From \Cref{maxEmbeddingsIncludingV} and \Cref{minEmbeddingsIncludingEdgeInGprime} we get
\begin{equation*}
    2e_H\left(\left(1-\gamma\varepsilon\right)n-v_H\right)^{v_H-2} \leq 2n^{v_H-2}\left(e_H+\left(\delta/n - 1 + O(\varepsilon)\right)\beta\right)
\end{equation*}
Dividing by $2n^{v_H-2}$ and rearranging yields:
\begin{align*}
    e_H\frac{\left(1-\gamma\varepsilon-\frac{v_H}{n}\right)^{v_H-2}-1}{\beta} + 1 - O(\varepsilon) & \leq \frac{\delta}{n}
\end{align*}
As $\varepsilon$ tends to zero and $n$ goes to infinity, $\left(1-\gamma\varepsilon-\frac{v_H}{n}\right)^{v_H-2}-1$ tends to zero while $1-O(\varepsilon)$ tends to one. Therefore, the left-hand side of the equation tends to one. In particular, there exist $n_2,\varepsilon_1 > 0$ depending only on $e_H,v_H,\sum_{i=0}^{k_c}e_{c_i} + 2k_p$, and $c$ such that if $n>n_2$ and $\varepsilon<\varepsilon_1$ the right-hand side is at least ${c}$ and therefore $\delta\geq{c}n$. Finally, choose $n_0=\max(n_1,n_2),\varepsilon_0=\min(\varepsilon_1,0.01)$ and we get $\delta\geq{c}n$.
\end{proof}

The goal of our next lemma is to show that in the second case of \Cref{separation}, the extremal graph must be a clique.
\begin{lemma}\label{handleClique}
The following holds for every elementary graph $H$, sufficiently small $\varepsilon>0$, and sufficiently large $n$. If $G$ is an $\binom{n}{2}$-extremal graph for $H$, $G$ contains a clique $G^\prime$ of order at least $(1-\varepsilon^4)n$, and for every vertex $v$ outside of $G^\prime$ we have $\left|N\left(v\right)\cap V(G^{\prime})\right|\leq\left(1-\varepsilon\right)n$, then $G=K_n$.
\end{lemma}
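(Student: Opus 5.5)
Write $t=v_{G'}$, $U=V(G)\setminus V(G')$, and let $E_{out}=E(G)\setminus E(G')$. Every edge of $E_{out}$ meets $U$. Since $G'$ is a clique, $e_{G'}=\binom{t}{2}$, so $|E_{out}|=\binom{n}{2}-\binom{t}{2}$. If $t=n$ then $E_{out}=\emptyset$ and $G=K_n$. So assume $t<n$. Then $|E_{out}|\ge \binom{n}{2}-\binom{n-1}{2}=n-1$, and since $t\ge(1-\varepsilon^4)n$ also $|E_{out}|\le \varepsilon^4 n^2$. The plan is to compare $\nemb{H}{G}$ with $(n)_{v_H}=\nemb{H}{K_n}$ and show $\nemb{H}{G}<(n)_{v_H}$, contradicting extremality.

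\textbf{Step 1: split the embeddings.} We have
\[
\nemb{H}{G}\le \nemb{H}{K_t}+\sum_{e\in E_{out}} N_e ,
\]
where $N_e$ is the number of embeddings of $H$ using $e$. The first term is $(t)_{v_H}$. The deficit $(n)_{v_H}-(t)_{v_H}$ is about $v_H n^{v_H-1}(n-t)$, which is roughly $\frac{2v_H}{n}\, n^{v_H-2}\,|E_{out}|$ since $|E_{out}|\approx(n-t)n$. Thus it suffices to show that on average $N_e\le 2e_H(1-\eta)n^{v_H-2}$ for a fixed $\eta>0$. Comparing, we need
\[
2e_H(1-\eta)n^{v_H-2}|E_{out}| < (n)_{v_H}-(t)_{v_H},
\]
which is the inequality $2e_H(1-\eta)\lesssim 2v_H$ after normalising. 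Here $e_H\ge v_H/2$, with more than that when $H$ has $P_3$ or odd-cycle components. This naive comparison is not obviously favourable, so I would argue more carefully as follows.

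\textbf{Step 2: bound $N_e$ using the sparse attachment.} Each $u\in U$ has at most $(1-\varepsilon)n$ neighbours in $G'$. In addition, $|U|=n-t$ is small relative to $n$: using $|E_{out}|\le \binom{n}{2}-\binom{t}{2}$ and the fact that every vertex has degree at least one, I expect $|U|\le O(\varepsilon^{3}n)$. I would count, component by component of a nice decomposition, embeddings in which some edge of $H$ is mapped into $E_{out}$.

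Fix an edge of $H$ mapped to $e=uw$ with $u\in U$. If that edge lies in a $P_3$ or odd-cycle component, then some further vertex of $H$ adjacent to the preimage of $u$ must be mapped into $N(u)$. That set has size at most $(1-\varepsilon)n+|U|\le(1-\varepsilon/2)n$. This gains a factor $(1-\varepsilon/2)$ over the trivial count, exactly as in Case 2 of \Cref{maxEmbeddingsIncludingV}, but using $\deg(u)$ instead of $\delta$. Embeddings where the edge of $E_{out}$ is an isolated-edge component of $H$ get no such gain. For those I would not bound per edge. Instead I would use the exact identity: the number of directed edges of $G$ is $2\binom{n}{2}$ regardless of $t$. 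So these components count the same as in $K_n$, and the loss must come from the non-matching part.

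\textbf{Step 3: restructure the comparison.} Write a nice decomposition $H=H_1\uplus M$ with $M$ the matching part. Then
\[
\nemb{H}{G}\le \nemb{H_1}{G}\cdot (2m)^{|M|},
\]
while $\nemb{H}{K_n}=(n)_{v_{H_1}}(2m)^{|M|}(1-O(1/n))$. The lower-order corrections need care; I would use \Cref{nogas} and the exact Alon-type count, or handle $M$ by conditioning.

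So it reduces to showing that $\nemb{H_1}{G}$ is smaller than $(n)_{v_{H_1}}$ by a margin of order $(n-t)n^{v_{H_1}-1}\varepsilon$, large enough to absorb the $O(1/n)$ factors. For $H_1$, a disjoint union of $P_3$'s and odd cycles, embeddings avoiding $U$ number $(t)_{v_{H_1}}$. Embeddings touching $U$ are bounded, via Step 2, by about $v_{H_1}|U|\,n^{v_{H_1}-1}(1-\varepsilon/2)$, because each vertex of $U$ has degree at most $(1-\varepsilon/2)n$ and every vertex of $H_1$ has degree at least $1$ and lies in a component of more than one edge. Hence
\[
\nemb{H_1}{G}\le (t)_{v_{H_1}}+(n-t)v_{H_1}n^{v_{H_1}-1}(1-\varepsilon/2)<(n)_{v_{H_1}}-c\,\varepsilon(n-t)n^{v_{H_1}-1}.
\]

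\textbf{Main obstacle.} The hardest step is making the matching part exact: the loss from $H_1$ is only of order $\varepsilon n^{v_{H}-1}$ when $t=n-1$, and this must beat the $O(n^{v_H-1})$ discrepancy between $(2m)^{|M|}\cdot(\cdots)$ and the true falling factorial count. I would try to avoid this by counting $M$ inside the complement of the image of $H_1$ exactly. Given $\varphi(H_1)$ with $k$ image vertices, the number of ways to embed $M$ in $G$ equals the number of injective embeddings of $M$ in $G-\varphi(V(H_1))$. This is a function of the edge counts after deleting $k$ vertices. Deleting vertices of the clique removes about $kn$ edges, while deleting low-degree vertices of $U$ removes fewer, so the deletion is at most as costly as in $K_n$ up to $O(n^{2|M|-2})$ terms. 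Those terms must then be shown dominated by the $\varepsilon$-margin above.
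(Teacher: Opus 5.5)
\textbf{There is a genuine gap in Steps 2--3.} The top-level strategy is right: split off $(t)_{v_H}$ and beat the deficit $(n)_{v_H}-(t)_{v_H}$ using the $(1-\varepsilon)n$ attachment bound. But the key estimate
\[
\nemb{H_1}{G}\le (t)_{v_{H_1}}+(n-t)\,v_{H_1}n^{v_{H_1}-1}(1-\varepsilon/2)
\]
is not established.

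Your justification rests on $|U|=O(\varepsilon^3 n)$ and $\deg(u)\le(1-\varepsilon/2)n$ for $u\in U$. Neither follows from the hypotheses.
\begin{itemize}
\item You only have $|U|\le 2|E_{out}|\le 2\varepsilon^4 n^2$, and a vertex of $U$ may have many neighbours inside $U$. For example, $K_{n-1}$ plus $n-1$ disjoint edges on $2n-2$ new vertices, or $K_{n-2}$ plus a star with $2n-3$ leaves, meet every hypothesis except extremality. The first has $|U|=2n-2$; the second has a $U$-vertex of degree $2n-3$. You give no argument that extremality excludes such structure.
\item A vertex-based count naturally produces a bound involving $|U|$, and in the display you silently replace $|U|$ by $n-t$. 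Since $|U|=v_G-t\ge n-t$ and $|U|$ can be much larger than $n-t$ (e.g. $2n-2$ versus $1$ in the first example), this substitution is exactly the unproved step.
\item In Step 2, if the preimage of $u$ is a leaf of a $P_3$, no further vertex is forced into $N(u)$, so the claimed gain fails there.
\end{itemize}

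The paper avoids all of this by counting edges rather than vertices. Let $I$ be the crossing edges and $O$ the edges inside $U$, so $|I|+|O|=\binom n2-\binom t2\le\zeta n^2$ with $\zeta=(n-t)/n$.
\begin{itemize}
\item For an odd cycle, the first-order configuration is an $E(G')$-edge, then an $I$-edge (whose orientation is then forced), then a second $I$-edge at the same $U$-vertex. The hypothesis allows at most $(1-\varepsilon)n$ choices for that second edge, giving $\ell(1-\varepsilon)\zeta n^{\ell}$.
\item For $P_3$, splitting by whether the $U$-vertex is an end or an inner vertex of the path gives $4(1-\varepsilon/2)\zeta n^4$.
\item Every remaining configuration uses at least two outside edges, so it is $O(\zeta^{3/2}n^{v_C})$. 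This is absorbed because $\zeta\le\varepsilon^4$ gives $\zeta^{3/2}\le\varepsilon^2\zeta$.
\end{itemize}
Neither $|U|$ nor any $\deg(u)$ ever enters.

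\textbf{The matching ``main obstacle'' is self-inflicted.} Factoring $\nemb{H}{G}\le\nemb{H_1}{G}(2m)^{|M|}$ loses $\Theta(n^{v_H-1})$ no matter how small $n-t$ is. Instead, for each component $C$ of $H$, bound the embeddings in which $\varphi(C)$ meets $E_{out}$, and bound the rest of $H$ by $(2m)^{(v_H-v_C)/2}\le n^{v_H-v_C}$. Then an edge component contributes at most $2|E_{out}|n^{v_H-2}\le 2\zeta n^{v_H}$. That is exactly its share $v_C\zeta n^{v_H}$ of the deficit
\[
(n)_{v_H}-(t)_{v_H}=v_H\zeta n^{v_H}\bigl(1+O(\zeta)+O(1/n)\bigr).
\]
All errors are then proportional to $\zeta$, and the $\varepsilon$-gains from the cycle and $P_3$ components finish the proof.

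Your proposed repair does not work as written, for three reasons.
\begin{itemize}
\item $\nemb{M}{F}$ is not a function of $e_F$.
\item Deleting vertices of $U$, or vertices of $G'$ with few $U$-neighbours, removes fewer edges than in $K_n$. So $G-\varphi(V(H_1))$ can have more edges than $K_{n-v_{H_1}}$, which is the wrong direction for an upper bound.
\item The resulting excess can be of order $(n-t)n^{2|M|-2}$ per $\varphi$, not $O(n^{2|M|-2})$. Controlling it needs the same per-component edge accounting described above.
\end{itemize}
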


\begin{proof}[Proof of \Cref{handleClique}] 
The asymptotic notation in this proof is in terms of both $n\rightarrow \infty$  and $\varepsilon \rightarrow 0$. That is, $f(\varepsilon,n)=O(g(\varepsilon,n))$ if there exists $M>0, C>0$ such that for any $n>M, \varepsilon < \frac{1}{M}$ we have $|f(\varepsilon,n)| \leq C \cdot g(\varepsilon, n).$

First we give an upper bound for the number of embeddings in $\semb{H}{G}$ that contain edges outside of $G^{\prime}$ (denote this number by $A$). This upper bound will depend on the difference between the sizes of $G^\prime$ and $G$. Then, by comparing $\nemb{H}{G^\prime}+A$ with $\nemb{H}{K_n}$, we will show that this difference has to be zero, and therefore $G$ is indeed a clique. 

Denote $\frac{n-v_{G^\prime}}{n}$ by $\zeta$ and let $C_1,\dots,C_{k_c},P_1,\dots,P_{k_p},e_1,\dots,e_{k_e}$ be a nice decomposition of $H$.

\begin{claim}\label{claim:HandlCliqueClaim1}
    $A\leq \zeta n^{v_H}\left(v_H-\frac{\varepsilon}{2}\right) + O\left(\zeta^{3/2} n^{v_H}\right).$
\end{claim}

We will now complete the proof of \Cref{handleClique} using \Cref{claim:HandlCliqueClaim1}:
Notice that
\begin{align*}
(n)_{v_H} = \nemb{H}{K_n} \leq \nemb{H}{G} = \nemb{H}{K_{v_{G^\prime}}}+A = \left(v_{G^\prime}\right)_{v_H} + A.
\end{align*}
Since we assumed that $G$ is $\binom{n}{2}$-extremal for $H$ we get that 

\begin{align*}
    \zeta n^{v_H}v_H + O\left(\zeta^2 n^{v_H}\right) & = \left(n\right)_{v_H}-\left(v_{G^\prime}\right)_{v_H}
    \\
    & \leq A 
    \\
    & \leq \zeta n^{v_H}\left(v_H-\frac{\varepsilon}{2}\right) + O\left(\zeta^{3/2} n^{v_H}\right)
\end{align*}

\begin{equation*}
    \implies \varepsilon \zeta n^{v_H} = O\left(\zeta^{3/2} n^{v_H}\right).
\end{equation*}
Recall that $\zeta = \frac{n-v_{G^\prime}}{n} \leq \frac{n-(1-\varepsilon^4)n}{n} \leq \varepsilon^4$, therefore,
\begin{align*}
    \varepsilon \zeta n^{v_H} = O\left(\varepsilon^2 \zeta n^{v_H}\right).
\end{align*}
This means that for sufficiently large $n$ and sufficiently small $\varepsilon>0$ we have $\zeta = 0$ in which case $n = (1-\zeta)n = v_{G^\prime}$. If we do have $\varepsilon=0$, then $G^\prime$ is a of order at least $n$. In both cases we get $G=G^\prime$ and we are done.

\begin{proof}[Proof of \Cref{claim:HandlCliqueClaim1}]
Let $O$ be the set of all edges which contain no vertex in $G^\prime$. Let $I$ be the set of edges that cross from $G^{\prime}$ to the rest of $G$. More formally, we denote:
\begin{align*}
    O &= \{\{u,v\}\in E(G):u,v \notin V(G^\prime)\}\\
    I &= \left\{\{u,v\}\in E\left(G\right):u\in V\left(G^{\prime}\right)\land v\notin V\left(G^{\prime}\right)\right\}
\end{align*}
Notice that $I\uplus O = E(G)\setminus E(G^\prime)$, denote $m_o=|I|+|O|, n_i = v_{G^\prime}$. Since $G^\prime$ is a clique of order $(1-\zeta)n$, we get $m_o=\binom{n}{2}-\binom{n_i}{2} \leq\zeta n^2$

Notice that $A$ is the number of embeddings in $\semb{H}{G}$ that contain an edge $e$ from $I\cup O$. To show the upper bound, we will again split into cases depending on which part of $H$ contains $e$. This overcounting yields an upper bound that remains sufficiently tight for our purposes. Throughout this proof we will repeatedly use the facts that $2m\leq n^2$ and that for any vertex $v$ outside of $G^\prime$ we have $|N(v)\cap  V(G^\prime)| \leq (1-\varepsilon)n$.

\noindent
\begin{align}
    A = &|\{\varphi\in\semb{H}{G}:\varphi(H) \cap (I\cup O) \neq \emptyset\}| \label{lemma2:claim1:eq1} \\
    \leq&\sum_{C\in \concom{H}}|\{\varphi\in\semb{C}{G}:\varphi(C) \cap (I\cup O) \neq \emptyset\}|\cdot(2m)^{\frac{v_H-v_{C}}{2}}. \label{lemma2:claim1:eq2}
\end{align}
The steps required to get \cref{lemma2:claim1:eq2} from \cref{lemma2:claim1:eq1} are almost identical to those taken to show \cref{l1c1e34} from \cref{l1c1e31} in the proof of \Cref{maxEmbeddingsIncludingV}.

\begin{figure}
    \centering
    \begin{adjustbox}{max width=\textwidth, keepaspectratio, trim=2cm 0cm 0.2cm 0cm, clip}
    \def\nodeSize{0.15cm}

\def\nodeInG#1{\draw[fill={rgb,255:red,218; green,232; blue,252},draw={rgb,255:red,108; green,142; blue,191}] (#1) ellipse (\nodeSize);}
\def\nodeOutG#1{\draw[fill={rgb,255:red,255; green,242; blue,204},draw={rgb,255:red,214; green,182; blue,86}] (#1) ellipse (\nodeSize);}
\def\nodeUnknown#1{\draw[fill={rgb,255:red,213; green,232; blue,212},draw={rgb,255:red,130; green,179; blue,102}] (#1) ellipse (\nodeSize);}

\def\edge#1#2{\draw[draw={rgb,255:red,130; green,179; blue,102},line width=0.1484cm] (#1) -- (#2);}
\def\dottedEdge#1#2{\draw[draw={rgb,255:red,128; green,128; blue,128},line width=0.1484cm, dotted] (#1) -- (#2);}
\def\pathEdge#1#2{\draw[draw={rgb,255:red,130; green,179; blue,102},line width=0.1484cm,line cap=round, dash pattern=on 0pt off 10pt] (#1) -- (#2);}
\def\pathCycle#1#2{\draw[draw={rgb,255:red,130; green,179; blue,102},line width=0.1484cm,line cap=round, dash pattern=on 0pt off 10pt](#1) circle (#2);}
\def\lineWidth{0.15cm}

\def\PathThree#1#2#3#4#5#6#7{}

\def\textCentered#1#2{\node[font=\footnotesize, align=center, anchor=north] at (#1) {#2};}
\def\textWest#1#2{\node[font=\footnotesize, align=center, anchor=west] at (#1) {#2};}
\def\textEast#1#2{\node[font=\footnotesize, align=center, anchor=east] at (#1) {#2};}
\def\textNorth#1#2{\node[font=\footnotesize, align=center, anchor=north] at (#1) {#2};}
\def\textSouth#1#2{\node[font=\footnotesize, align=center, anchor=south] at (#1) {#2};}

\def\TikzTitle#1#2{\node[font=\footnotesize, align=center, anchor=west] at (#1) {\textbf{#2}};}

\begin{tikzpicture}[x=0.04cm,y=-0.04cm]

\textCentered{100.00,15.50}{$e_1 \in E(G^\prime) \wedge e_2, e_3\in I$:\\($|I|(1-\varepsilon)n(2m)^{(\ell-3)/2}$ embeddings)}
\edge{100.00,50.50}{149.45,86.43}
\dottedEdge{149.45,86.43}{130.56,144.57}
\pathEdge{130.56,144.57}{69.44,144.57}
\dottedEdge{69.44,144.57}{50.55,86.43}
\edge{50.55,86.43}{100.00,50.50}
\nodeOutG{100.00,50.50}
\nodeInG{149.45,86.43}
\nodeInG{130.56,144.57}
\nodeUnknown{69.44,144.57}
\nodeInG{50.55,86.43}
\textWest{124.73,68.47}{$e_3$ \\  ($(1-\varepsilon)n$ options\\ given $e_2$) \\}
\textWest{140.01,115.50}{\\(given by \\the path,$e_3$)}
\textNorth{100.00,144.57}{Path with $\ell-3$ vertices \\ ($(2m)^{(\ell-3)/2}$ options)}
\textEast{59.99,115.50}{$e_1$\\(given by \\the path,$e_2$)}
\textEast{75.27,68.47}{$e_2$ \\ ($|I|$ options) \\\\}
\textCentered{300.00,15.00}{$e_1 \in E(G^\prime)\wedge e_2\in I\wedge e_3\in O\wedge e_4\notin E(G^\prime)$:\\($2|I|m_on_i(2m)^{(\ell-5)/2}$ embeddings)}
\dottedEdge{300.00,50.00}{339.09,68.83}
\edge{339.09,68.83}{348.75,111.13}
\dottedEdge{348.75,111.13}{321.69,145.05}
\pathEdge{321.69,145.05}{278.31,145.05}
\dottedEdge{278.31,145.05}{251.25,111.13}
\edge{251.25,111.13}{260.91,68.83}
\edge{260.91,68.83}{300.00,50.00}
\nodeOutG{300.00,50.00}
\nodeOutG{339.09,68.83}
\nodeUnknown{348.75,111.13}
\nodeUnknown{321.69,145.05}
\nodeOutG{278.31,145.05}
\nodeInG{251.25,111.13}
\nodeInG{260.91,68.83}
\textWest{319.55,59.41}{$e_3$ \\ (given by $e_2,e_4$) \\\\}
\textWest{343.92,89.98}{$e_4$ \\ ($2m_o$ options) \\}
\textWest{335.22,128.09}{\\(given by \\the path,$e_4$)}
\textNorth{300.00,145.05}{Path with $\ell-5$ vertices \\ ($(2m)^{(\ell-5)/2}$ options)}
\textEast{264.78,128.09}{\\(given by \\the path,$e_1$)}
\textEast{256.08,89.98}{$e_1$ \\ ($n_i$ options \\ given $e_2$)}
\textEast{280.45,59.41}{$e_2$ \\ ($|I|$ options) \\\\}
\pathCycle{200.00,230.00}{50}
\textCentered{200.00,155.00}{$E(C)\cap E(G^\prime)=\emptyset$:\\($(2m_o)^{\ell/2}$ embeddings)}

\end{tikzpicture}
    \end{adjustbox}
    \caption{Upper bound on the number of embeddings of cycles that contain edges from $I\cup O$.}
    \label{fig:CountingCyclesIO}
\end{figure}

\medskip
\noindent\textbf{Case 1} $C$ is an odd cycle of length $\ell$:

We claim that there are at most $v_C\left(1-\varepsilon\right)\zeta n^{v_C} + O\left(\zeta^\frac{3}{2} n^{v_C}\right)$ such embeddings. We split into three sub-cases. 

    First, by \Cref{nogas} there are at most $(2m_o)^{\ell/2}$ embeddings such that $E(\varphi(C))\cap E(G^\prime) = \emptyset$.

    Secondly, observe that there are at most $\ell|I|(1-\varepsilon)n(2m)^{(\ell-3)/2}$ embeddings such that there are consecutive edges $e_i,e_{i+1},e_{i+2}\in E(\varphi(C))$ and $e_{i} \in E(G^\prime) \wedge e_{i+1}, e_{i+2}\in I$. Indeed, we can choose $i$ ($\ell$ options). We then choose $e_{i+1}$ from $I$ ($|I|$ options), there is only one way in which it can be oriented since $e_i\in E(G^\prime)$. Denote the vertex in $e_{i+1}\cap e_{i+2}$ by $v$. We choose $e_{i+2}$ from $I\cap E(v)$ (at most $(1-\varepsilon)n$ options). Finally, we complete the embedding to a cycle by specifying a path of length $\ell-3$ (at most $(2m)^{(\ell-3)/2}$ options by \Cref{nogas}).

    Finally for the cycle case, there are at most $2\ell|I|m_on_i(2m)^{(\ell-5)/2}$ embeddings such that there are consecutive edges $e_i,e_{i+1},e_{i+2},e_{i+3}\in E(\varphi(C))$ and $e_{i} \in E(G^\prime)\wedge e_{i+1}\in I\wedge e_{i+2}\in O\wedge e_{i+3}\in I\cup O$. If $\ell = 3$ (i.e.\ $C=K_3$) this is trivial since there are no such embeddings. Otherwise, if $\ell \geq 5$ this bound can be obtained by the following counting argument: We first choose $i$ ($\ell$ options). We then choose $e_{i+1}$ from $I$ ($|I|$ options), again, there is only one way in which it can be oriented since $e_i\in E(G^\prime)$. We choose $e_{i+3}$ from $I\cup O$ (at most $m_o$ options) and its orientation (two options). We then choose the other endpoint of $e_i$ from $V(G^\prime)$ ($n_i$ options). Finally, we complete the embedding by specifying a path with $\ell-5$ vertices (at most $(2m)^{(\ell-5)/2}$ options by \Cref{nogas}).
    
    Again, since $G^\prime$ is an induced subgraph, these are the only possible cases. Summing all of those, we get 
    \begin{equation}
    \label{l7paths}
    \begin{split}
    &\left|\left\{\varphi\in\semb{C}{G}:\varphi(C) \cap (I\cup O) \neq \emptyset \right\}\right|\\
    &\leq \left(2m_o\right)^{\ell/2} + \ell |I|\left(1-\varepsilon\right)n\left(2m\right)^{\left(\ell-3\right)/2} + 2\ell |I|m_o n_i\left(2m\right)^{\left(\ell-5\right)/2}\\
    &\leq \left(\left(2\zeta\right)^{\frac{\ell}{2}} +\ell \zeta\left(1-\varepsilon\right) + 2\ell\zeta^2\right)n^\ell\\
    &= \ell(1-\varepsilon)\zeta n^\ell + O(\zeta^\frac{3}{2} n^\ell)\\
    &=v_C\left(1-\varepsilon\right)\zeta n^{v_C} + O\left(\zeta^\frac{3}{2} n^{v_C}\right).
    \end{split}
    \end{equation}

This counting process is illustrated in \Cref{fig:CountingCyclesIO}.

\begin{figure}
    \centering
    \begin{adjustbox}{max width=\textwidth, keepaspectratio, trim=1.9cm 0cm 0cm 0cm, clip}
    \def\nodeSize{0.15cm}

\def\nodeInG#1{\draw[fill={rgb,255:red,218; green,232; blue,252},draw={rgb,255:red,108; green,142; blue,191}] (#1) ellipse (\nodeSize);}
\def\nodeOutG#1{\draw[fill={rgb,255:red,255; green,242; blue,204},draw={rgb,255:red,214; green,182; blue,86}] (#1) ellipse (\nodeSize);}
\def\nodeUnknown#1{\draw[fill={rgb,255:red,213; green,232; blue,212},draw={rgb,255:red,130; green,179; blue,102}] (#1) ellipse (\nodeSize);}

\def\edge#1#2{\draw[draw={rgb,255:red,130; green,179; blue,102},line width=0.1484cm] (#1) -- (#2);}
\def\dottedEdge#1#2{\draw[draw={rgb,255:red,128; green,128; blue,128},line width=0.1484cm, dotted] (#1) -- (#2);}
\def\pathEdge#1#2{\draw[draw={rgb,255:red,130; green,179; blue,102},line width=0.1484cm,line cap=round, dash pattern=on 0pt off 10pt] (#1) -- (#2);}

\def\lineWidth{0.15cm}

\def\PathThree#1#2#3#4#5#6#7{}

\def\textCentered#1#2{\node[font=\footnotesize, align=center, anchor=north] at (#1) {#2};}
\def\textWest#1#2{\node[font=\footnotesize, align=center, anchor=west] at (#1) {#2};}
\def\textEast#1#2{\node[font=\footnotesize, align=center, anchor=east] at (#1) {#2};}
\def\textNorth#1#2{\node[font=\footnotesize, align=center, anchor=north] at (#1) {#2};}
\def\textSouth#1#2{\node[font=\footnotesize, align=center, anchor=south] at (#1) {#2};}

\def\TikzTitle#1#2{\node[font=\footnotesize, align=center, anchor=west] at (#1) {\textbf{#2}};}

\begin{tikzpicture}[x=0.04cm,y=-0.04cm]

\textWest{0.00,21.00}{$E(\varphi(C)) \cap E(G^\prime) = \emptyset$:\\($(2m_o)^2$ embeddings)}
\edge{50.00,55.00}{150.00,55.00}
\textCentered{100.00,31.00}{$e_1$ \\ ($\leq2m_o$ options)}
\edge{150.00,55.00}{250.00,55.00}
\textCentered{200.00,31.00}{$e_2$ \\ (given by $e_{1}$, $e_{3}$)}
\edge{250.00,55.00}{350.00,55.00}
\textCentered{300.00,31.00}{$e_3$ \\ ($\leq2m_o$ options)}
\nodeUnknown{50.00,55.00}
\nodeUnknown{150.00,55.00}
\nodeUnknown{250.00,55.00}
\nodeUnknown{350.00,55.00}
\textWest{0.00,76.00}{$e_3,e_2 \in E(G^\prime)\wedge e_1 \in I$ \\(($2|I|m$) embeddings)}
\edge{50.00,110.00}{150.00,110.00}
\textCentered{100.00,86.00}{$e_1$ \\ ($\leq |I|$ options)}
\edge{150.00,110.00}{250.00,110.00}
\textCentered{200.00,86.00}{$e_2$ \\ (given by $e_{1}$, $e_{3}$)}
\edge{250.00,110.00}{350.00,110.00}
\textCentered{300.00,86.00}{$e_3$ \\ ($\leq2m$ options)}
\nodeOutG{50.00,110.00}
\nodeInG{150.00,110.00}
\nodeInG{250.00,110.00}
\nodeInG{350.00,110.00}
\textWest{0.00,131.00}{$e_3\in E(G^\prime)\wedge e_2,e_1 \notin E(G^{\prime})$:\\($(|I|+2|O|)(1-\varepsilon)nn_i$ embeddings)}
\edge{50.00,165.00}{150.00,165.00}
\textCentered{100.00,141.00}{$e_1$ \\ ($\leq |I| + 2|O|$ options)}
\edge{150.00,165.00}{250.00,165.00}
\textCentered{200.00,141.00}{$e_2$ \\($\leq(1-\varepsilon)n$ options)}
\edge{250.00,165.00}{350.00,165.00}
\textCentered{300.00,141.00}{$e_3$ \\($\leq n_i$ options)}
\nodeUnknown{50.00,165.00}
\nodeOutG{150.00,165.00}
\nodeInG{250.00,165.00}
\nodeInG{350.00,165.00}

\end{tikzpicture}
    \end{adjustbox}
    \caption{Upper bound on the number of embeddings of paths of length three that contain edges from $I\cup O$.}
    \label{fig:CountingPathsIO}
\end{figure}
\medskip
\noindent\textbf{Case 2} $C$ is a path of length three:

We claim that there are at most $v_C\left(1-\frac{\varepsilon}{2}\right)\zeta n^{v_C} + O\left(\zeta^2 n^{v_C}\right)$ such embeddings. Denote the edges in $\varphi(C)$ as $e_1,e_2,e_3$. We again split into three cases.

    By \Cref{nogas} there are at most $(2m_o)^2$ embeddings such that $E(\varphi(C)) \cap E(G^\prime) = \emptyset$.
    
    Next, there are at most $4|I|m$ embeddings such that $\left(e_1,e_2 \in E\left(G^\prime\right) \wedge e_3 \in I\right) \vee \left(e_3,e_2 \in E\left(G^\prime\right)\wedge e_1 \in I\right)$. This can be seen by the following counting argument: We choose whether $e_1$ or $e_3$ is in $I$ (two options). Without loss of generality, we chose $e_1$ from $I$ ($\left|I\right|$ options). Notice that there is only one way in which it can be oriented. We then choose $e_3$ (including orientation) from $E(G^\prime)$ ($2m$ options). We get that there are at most $4|I|m$ embeddings of this type.
    
    Finally, we show that there are at most $2(|I|+2|O|)(1-\varepsilon)n n_i$ embeddings such that $(e_1\in E(G^\prime)\wedge e_2,e_3 \notin E(G^{\prime}))\vee (e_3\in E(G^\prime)\wedge e_2,e_1 \notin E(G^{\prime}))$. Indeed, we can choose whether $e_1$ or $e_3$ is outside of  $G^{\prime}$ (two options). Without loss of generality, assume that $e_1 \notin E(G^{\prime})$. If $e_1\in I$, then there is only one way in which it can be oriented (since $e_2\notin E(G^\prime))$. If $e_1\in O$, then it could be oriented in two ways ($|I|+2|O|$ options in total). Let $v$ be the vertex in $e_1\cap e_2$, we choose $e_2$ from $I\cap E(v)$ (at most $(1-\varepsilon)n$ options). Finally, we choose the other endpoint of $e_3$ from $V(G^\prime)$ (at most $n_i$ options).

    Once more, since $G^\prime$ is an induced subgraph, those are the only possible cases. Summing all of those, we get
    \begin{equation}
    \label{l7cycles}
    \begin{split}
    &\left|\left\{ \varphi\in\semb{C}{G}:\varphi(C) \cap (I\cup O) \neq \emptyset \right\}\right|\\
    &\leq (2m_o)^2 + 4|I|m + 2(|I|+2|O|)(1-\varepsilon)nn_i\\
    &\leq 4m_o^2 + 4(m_o-|O|)m + 2(m_o+|O|)(1-\varepsilon)n^2\\
    &\leq 4\zeta^2 n^4 + 2\left(\zeta n^2-|O|\right)n^2 + 2\left(\zeta n^2+|O|\right)\left(1-\varepsilon\right)n^2\\
    &\leq 4\zeta^2 n^4 + 2\zeta n^4 + 2\left(1-\varepsilon\right)\zeta n^4 -|O|\varepsilon n^2\\
    &= 4\left(1-\frac{\varepsilon}{2}\right)\zeta n^4 + O\left(\zeta^2 n^4\right)\\
    &=v_C\left(1-\frac{\varepsilon}{2}\right)\zeta n^{v_C} + O\left(\zeta^2 n^{v_C}\right).
    \end{split}
    \end{equation}

This counting process is illustrated in \Cref{fig:CountingPathsIO}.

\medskip
\noindent\textbf{Case 3} $C$ is an edge: We use the trivial bound to get
    \begin{equation}
    \label{l7edges}
    \begin{split}
    &\left|\left\{ \varphi\in\semb{C}{G}:\varphi(C) \cap (I\cup O) \neq \emptyset \right\}\right| \leq 2m_o \leq 2\zeta n^2.
    \end{split}
    \end{equation}

\cref{l7cycles}, \cref{l7paths}, and \cref{l7edges} give us the following: 

\begin{align}
    \label{lemma2:claim1:eq3}
    \cref{lemma2:claim1:eq2} \leq & \zeta n^{v_H} \left(\sum_{i=1}^{k_c}v_{C_i}\cdot\left(1-\varepsilon\right) + k_p\cdot4\left(1-\frac{\varepsilon}{2}\right) + k_e\cdot2\right)
    \\
    \nonumber
    & + O\left(\zeta^{3/2} n^{v_H}\right).
\end{align}
Since we assumed that $H$ is elementary, and thus $k_c+k_p\geq1$, we get:
\begin{equation*}
    \cref{lemma2:claim1:eq3}\leq \zeta n^{v_H}\left(v_H-\frac{\varepsilon}{2}\right) + O\left(\zeta^{3/2} n^{v_H}\right).
\end{equation*}
This completes the proof of \Cref{claim:HandlCliqueClaim1}.

\end{proof}

\end{proof}

In our last main lemma, we show that in the first case of \Cref{separation} the extremal graph is also a clique.
\begin{lemma}\label{ShowAndhandleO1}
The following holds for every elementary graph $H$, sufficiently large $c<1$, and sufficiently large $n$.  If $G$ is an $\binom{n}{2}$-extremal graph for $H$ and the minimum degree of $G$ is at least ${c} \cdot n$, then $G=K_n$.
\end{lemma}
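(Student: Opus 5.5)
Following the outline in \Cref{section:proofOutline}, the plan has two parts. First show that $N\coloneqq v_G$ satisfies $N\le n+O(1)$. Then use this to force $G=K_n$ by a vertex-deletion and redistribution argument.

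\textbf{Part 1: $N=n+O(1)$.} Write $N=n+t$. Since $e_G=\binom{n}{2}$, $G$ misses $M\coloneqq\binom{N}{2}-\binom{n}{2}\approx tn$ pairs. First take the case where $H$ contains a copy $P$ of $P_3$. Count injective maps of $P$ into $V(G)$ whose middle pair is an edge but at least one outer pair is a non-edge. Because $\delta(G)\ge cn$, each non-edge $xy$ extends to at least $\approx cn\cdot 2e_G$ such maps: take $y$ (or $x$) as an endpoint, a neighbour of the other vertex as the middle, and any edge for the far end. Hence
\begin{align*}
\nemb{P_3}{G}\le \#\{\text{maps with middle edge}\}-\Omega(cn\cdot M\cdot n^{2}/n)\,,
\end{align*}
and the first term is at most $\sum_{uv\in E}d(u)d(v)$-type quantities bounded by $(2e_G)^2$. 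Combined with $\nemb{H-P}{G}\le(2e_G)^{(v_H-4)/2}$ from \Cref{nogas}, this gives $\nemb{H}{G}\le (2e_G)^{v_H/2}\bigl(1-\Omega(t/n)\bigr)$. Extremality requires $\nemb{H}{G}\ge(n)_{v_H}=(2e_G)^{v_H/2}(1-O(1/n))$, so $t=O(1)$.

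For $H$ containing only odd cycles and edges, I would use an entropy argument in the style of \cite{TutorialLecturesOnEntropy}. Take a uniformly random embedding of the odd cycle $C_\ell$ and use the Shearer-type bound $\ent{X}\le \tfrac12\sum_i \ent{X_i,X_{i+1}}$. Strengthen it by noting that the marginal of a vertex is a distribution on $N$ vertices whose degrees are at least $cn$, with total degree $2e_G$. The slack in the inequality $\ent{X_i,X_{i+1}}\le\log 2e_G$ should be at least $\Omega(t/n)$, because spreading over $N=n+t$ vertices with the fixed edge budget forces non-uniformity on pairs. This is the step I expect to be the main obstacle. What I would try first is to exhibit a deficiency of order $t/n$ in $\ent{X_i\mid X_{i-1}}$ relative to $\log$ of the degree, using that the missing pairs are numerous ($\gtrsim tcn$) while all degrees are at most $N-1$.

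\textbf{Part 2: $t=0$ and $G=K_n$.} Suppose $G\neq K_n$, so $t\ge1$ but $t=O(1)$. Let $v$ be a vertex of minimum degree $\delta\le N-1$. Delete $v$ and reinsert its $\delta$ edges among the $\Theta(tn)$ missing pairs of $G-v$, choosing them spread out, for instance missing pairs with endpoints of large degree. The loss is at most the number of embeddings through $v$, which by the computation of \Cref{maxEmbeddingsIncludingV} is about $\delta\cdot 2e_H n^{v_H-2}(1+O(1/n))$ with a refined second-order term. Each added edge gains at least about $2e_H (N-O(1))^{v_H-2}$ embeddings, as in \Cref{minEmbeddingsIncludingEdgeInGprime} with $\varepsilon=O(1/n)$ thanks to $t=O(1)$. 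The first-order terms match, so the comparison has to be carried to second order. The gain should win because almost all neighbourhoods in $G-v$ are near-complete. A cleaner route, which I would prefer, is to compare exactly against the clique: $G-v$ has $N-1$ vertices and $e_G-\delta$ edges. Iterating the deletion until $N=n$ then yields a graph on $n$ vertices with $\binom n2$ edges, that is $K_n$, and each step strictly increases the count. This contradicts extremality unless $G=K_n$.
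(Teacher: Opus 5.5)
Your proposal has genuine gaps. The decisive one is in Part 2, and Part 1 is also incomplete.

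\textbf{Part 2.} You take the loss from deleting $v$ to be about $\delta\cdot 2e_H n^{v_H-2}$, the quantity bounded in \Cref{maxEmbeddingsIncludingV}. That quantity counts each embedding through $v$ once for every one of its edges at $v$, i.e.\ with multiplicity equal to the $H$-degree of the preimage of $v$. The number of \emph{distinct} embeddings through $v$ is at most $v_H N^{v_H-1}$: choose the preimage of $v$, then map the other vertices arbitrarily. The $\delta\ge cn$ reinserted pairs create at least $\delta\cdot 2e_H\bigl(1-O(1-c)-o(1)\bigr)n^{v_H-2}$ new embeddings. This is because the embeddings of $(G-v)\cup\{e\}$ through $e$ are disjoint for different $e$, and each family is bounded below by the greedy common-neighbourhood count of \Cref{minEmbeddingsIncludingEdgeInGprime}. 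Since $H$ has an odd-cycle or $P_3$ component, $2e_H-v_H=\sum_i e_{C_i}+2k_p>0$. So for $c$ close to $1$ and $n$ large, the gain beats the loss already at first order, and this is how the paper finishes. Your claim that ``the first-order terms match'' is therefore false; they match only for matchings, which is precisely the excluded case. The second-order comparison you say is required is never carried out. In your ``cleaner route'', the assertion that each step strictly increases the count is just the statement to be proved. Moreover, one strict step already contradicts extremality, so no iteration is needed.

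\textbf{Part 1.} For odd cycles, including triangles, you give no argument. The edge cover $\ent{X}\le\frac12\sum_i\ent{X_i,X_{i+1}}$ cannot see $N$, and you offer no mechanism for an $\Omega(t/n)$ deficit. The paper handles the two cycle cases as follows:
\begin{itemize}
\item For $\ell\ge5$, it applies Shearer's lemma to the $\ell$ windows $\{X_j,\dots,X_{j+3}\}$ of four consecutive vertices, each vertex being covered four times. This gives $\nemb{C_\ell}{G}\le\nemb{P_3}{G}^{\ell/4}$ and reduces to the path case.
\item For the triangle, it argues directly: $\nemb{K_3}{G}$ is at most $\nemb{K_2\cup K_1}{G}\le 2e_GN$ minus the at least $2M\cdot cn$ maps in which $v_1v_2$ is an edge and $v_2v_3$ is a non-edge.
\end{itemize}

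Your $P_3$ case also contains an error. The number of injective maps whose middle pair is an edge is $2e_G(N-2)(N-3)$, which for large $t$ exceeds $(2e_G)^2$ by about $2tn^3$, so it is not bounded by $(2e_G)^2$. The argument survives only because your subtracted term is about $4c\,tn^3$ and $c>1/2$. Alternatively, as in the paper, let the outer pairs be edges and the middle pair a non-edge. Then the main term is $\nemb{K_2\cup K_2}{G}\le(2e_G)^2$ and the subtracted term is at least $2M\delta(\delta-1)$.

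Finally, once Part 2 is argued at first order, Part 1 can be skipped altogether. The bound $\delta\ge cn$ gives $N\le(n-1)/c$. Hence every vertex of $G-v$ misses at most $(1/c-c)n$ others, which is all the greedy lower bound on the gain needs, and the loss is at most $v_H(n/c)^{v_H-1}$.
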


\begin{proof}[Proof of \Cref{ShowAndhandleO1}]
For this proof, we will denote
\begin{equation*}
    \negnemb{\widetilde{H}}{e}{\widetilde{G}}\coloneqq\left|\left\{\varphi\in \semb{\widetilde{H}}{\widetilde{G}} : \varphi(e)\notin E(\widetilde{G}) \right\} \right|
\end{equation*} for any two graphs $\widetilde{H},\widetilde{G}$ and $e\in \binom{V\left(\widetilde{H}\right)}{2}\setminus E\left(\widetilde{H}\right)$.

First, we will show that $G$ has no more than $n+O(1)$ vertices. We will then use this claim to show that $v_G=n$, i.e.\ $G$ is a clique.
Denote $\zeta=\frac{v_G-n}{n}$ (that is, $v_G=(1+\zeta)n$).
\begin{claim}\label{claim:showAndHandleo1:claim1}
$\zeta=O\left(\frac{1}{n}\right).$
\end{claim}

\begin{proof}[Proof of \Cref{claim:showAndHandleo1:claim1}]
Let $C$ be a connected component of $H$ which is either an odd cycle or a path of length three (there must be such a component since $H$ is elementary). We split into three cases depending on whether $C$ is a triangle, a cycle of length $\ell\geq5$, or a path. In the first two cases we will use a direct counting argument, while in the last one we will use entropy and Shearer's lemma. The idea behind all three cases is similar: having non-edges incident to a vertex of high degree is wasteful since those edges could have been part of a large number of embeddings. Therefore in the extremal case there cannot be many of those. However, since we assumed that $G$ is of minimum degree ${c}n$, we get that there are very few non-edges in the entire graph. Since the number of non-edges is directly correlated to the number of vertices in a high-degree graph, this will give us the desired result.

By the maximality of $G$:
\begin{align}
    \nonumber
    \left(1+O\left(\frac{1}{n}\right)\right)n^{v_H} &= \nemb{H}{K_n} \leq \nemb{H}{G}\\
    \nonumber
    &\leq \nemb{H-C}{G}\nemb{C}{G} \\
    \nonumber
    & \leq \nemb{C}{G}n^{v_H-v_C}.\\
    \label{eq:showO1:common1}
    \implies \nemb{C}{G} &\geq \left(1+O\left(\frac{1}{n}\right)\right)n^{v_C}.
\end{align}
\medskip
\noindent\textbf{Case 1} $C$ is a triangle. 

Denote the vertices in $C$ as $v_1,v_2,v_3$. Notice that 
\begin{align}
    \label{eq:showO1:triangle:eq0} \nemb{C}{G} \leq & \nemb{v_1v_2\cup v_3}{G} - \negnemb{v_1v_2\cup v_3}{v_2v_3}{G}
\end{align}
where $v_1v_2\cup v_3$ denotes the graph $(\left\{v_1,v_2,v_3\right\}, \left\{v_1v_2\right\})$ and $v_2v_3$ denotes the respective edge.
Combining \cref{eq:showO1:common1} with \cref{eq:showO1:triangle:eq0} we get the following:
\begin{align*}
    \left(1+O\left(\frac{1}{n}\right)\right)n^{v_C} & \leq \nemb{v_{1}v_{2}\cup v_{3}}{G}-\negnemb{v_1v_2\cup v_3}{v_{1}v_{3}}{G}\\
    \left(1+O\left(\frac{1}{n}\right)\right)n^{3} & \leq 2\binom{n}{2}v_{G}-2\left(\binom{v_G}{2}-\binom{n}{2}\right)cn\\
    \left(1+O\left(\frac{1}{n}\right)\right)n^{3} & \leq n^{2}v_{G}-\left(v_{G}^{2}-n^{2}\right){c}n\\
    O\left(n^{2}\right) & \geq \left((2c-1)\zeta+c\zeta^{2}\right)n^{3}\\
    \zeta & = \Theta\left(\frac{1}{n}\right).
\end{align*}
Here, the last transition holds for $c>\frac{1}{2}$. This completes the proof in this case.

\medskip
\noindent\textbf{Case 2} $C$ is a path of length three.

Denote its vertices by $v_1,v_2,v_3,v_4$ and $\delta=\delta(G)$.
\begin{align}
    \label{eq:showO1:path:eq0}
    \nemb{C}{G} \leq & \underbrace{\nemb{v_1v_2\cup v_3v_4}{G}}_{\#1} - \underbrace{\negnemb{v_1v_2 \cup v_3v_4}{v_2v_3}{G}}_{\#2}.
\end{align}
We have
\begin{align}
    \label{eq:showO1:path:eq1}\#1 \leq & \left(2\binom{n}{2}\right)^{2}.\\
    \label{eq:showO1:path:eq2}\#2 \geq & 2\left({\binom{v_G}{2}}-{\binom{n}{2}}\right)\delta\left(\delta-1\right).
\end{align}

Here \cref{eq:showO1:path:eq1} holds since we can upper bound the number of embeddings in $\semb{v_1v_2\cup v_3v_4}{G}$  by the number of ways to choose embeddings of $v_1v_2$ and $v_3v_4$.
\cref{eq:showO1:path:eq2} holds because we can lower bound $\negnemb{v_1v_2\cup v_3v_4}{v_2v_3}{G}$ by counting the ways to choose $\neg v_2v_3$ (including orientation) from $\binom{V(G)}{2}\setminus E(G)$, and then choosing $v_1,v_4$ from the neighborhoods of $v_2, v_3$. Now, by \cref{eq:showO1:common1}
\begin{align}
    \label{eq:showO1:path:eq3}
    \#1 - \#2 \geq \nemb{C}{G} \geq \left(1+O\left(\frac{1}{n}\right)\right)n^4.
\end{align}

Combining \cref{eq:showO1:path:eq1}, \cref{eq:showO1:path:eq2} and \cref{eq:showO1:path:eq3} we get
\begin{align*}
    \left(2\binom{n}{2}\right)^{2} - 2\left({\binom{v_G}{2}}-{\binom{n}{2}}\right)\delta\left(\delta-1\right) &\geq \left(1+O\left(\frac{1}{n}\right)\right)n^4
    \\
    n^4 - (\zeta^2 + 2\zeta)n^2\delta^2 & \geq \left(1+O\left(\frac{1}{n}\right)\right)n^4\\
    c^2(2\zeta+\zeta^2)n^4 & \leq O(n^3)
    \\
    \zeta &= O\left(\frac{1}{n}\right).
\end{align*}

and we are done with this case. 

\medskip
\noindent\textbf{Case 3} $C$ is a cycle of odd length $\ell\geq5$.

We claim that
\begin{align}
    \label{eq:showO1:cycle:eq1}
    \nemb{P_3}{G} \geq \left(1-O\left(\frac{1}{n}\right)\right)n^4.
\end{align}
To prove it, we use an entropy argument.
Denote $C$'s vertices as $v_1,\dots,v_\ell$ and the path through $v_1,\dots,v_{4}$ by $P$. Let $X_C\coloneqq X_{1},\dots,X_{v_C}$ be a uniformly chosen embedding of $C$ in $G$. Where $X_i$ is the image of the $i$th vertex by some cyclic ordering of $V(C)$ and $X_{P} \coloneqq \left(X_i \right)_{\{i:v_i\in P\}}$. Let $\textsf{H}$ be Shannon's entropy function. We have 
\begin{align}
    \label{eq:showO1:entropy:eq1} \ent{X_C} & \leq \frac{\ell}{4}\ent{X_P}\\
    \label{eq:showO1:entropy:eq2} \implies\nemb{C}{G} & \leq \nemb{ P_3 }{G}^{\frac{\ell}{4}}
\end{align}
Here \cref{eq:showO1:entropy:eq1} is given by Shearer's lemma with respect to the collection 
\begin{align*}
\left\{\left\{ X_{j},\dots,X_{j+3} \right\} : j \in\left[\ell\right] \right\} \text{ (where addition is performed mod $\ell$)}
\end{align*}
Which covers every vertex in $C$ exactly four times.

\cref{eq:showO1:entropy:eq2} is given by the maximality of the uniform. By \cref{eq:showO1:common1} we get 
\begin{align*}
    \left(1-O\left(\frac{1}{n}\right)\right)n^\ell & \leq \nemb{C}{G} \leq \nemb{P_3}{G}^\frac{\ell}{4}.
    \\
    \implies & \cref{eq:showO1:cycle:eq1}.
\end{align*}

This is exactly the condition we used in Case 2, which again gives us $\zeta = O\left(\frac{1}{n}\right)$.
This completes the proof of \Cref{claim:showAndHandleo1:claim1}

\end{proof}

We now show that $v_G=n$. By \Cref{claim:showAndHandleo1:claim1} there exist a constant $C$ depending only on $H$ such that if $n$ is sufficiently large then $v_G\leq n+C$. We assume in order to derive a contradiction that $v_G = n+d, d> 0$. Let $v_0$ be a vertex of minimum degree in $G$ and $S$ be a set of $\deg(v_0)$ non-edges in $G-v_0$ (there exist at least  $n>\deg(v_0)$ non-edges since $v_G > n, e_G=\binom{n}{2}$). Denote $\bar{G}\coloneqq (G-v_0)\cup S$. We will show that $\nemb{H}{G}<\nemb{H}{\bar{G}}$, this contradicts the assumption that $G$ is $\binom{n}{2}$-extremal for $H$.
Denote $A \coloneqq \nemb{H}{G}-\nemb{H}{G-v_0}$, $B = \nemb{H}{\bar{G}}-\nemb{H}{G-v_0}$. We wish to show that $\nemb{H}{\bar{G}}-\nemb{H}{G} = B-A > 0$ (i.e.\ $A<B$) as this implies $\nemb{H}{G}<\nemb{H}{\bar{G}}$ . 

Notice that 
\begin{align*}
    A & = \left|\{\varphi\in\semb{H}{G}:v_0\in\varphi\left(H\right)\}\right|
    \\
    & = \sum_{v\in V(H)}\left|\{\varphi\in\semb{H}{G}:v_0 = \varphi(v)\}\right|
    \\
    & \leq v_H\cdot v_G^{v_H-1}.
\end{align*}

As for $B$, let $e=v_1v_2$ be a non-edge in $G-v_0$. We want to show that by adding $e$, we add $\approx 2e_Hn^{v_H-2}$ new embeddings of $H$. We use a very similar argument to the one we made in \Cref{minEmbeddingsIncludingEdgeInGprime}.

\noindent Let $W$ be a set of vertices in $G$. As before, define $F(W)\coloneqq\bigcap_{v\in W}N(v)\setminus W$.
We know that the minimum degree of $G$ is at least $cn$, and that $v_G = n + d$. Therefore, for every vertex $v$ in $W$: $|V(G)\setminus N(v)| \leq n+d-cn\leq (1-c)n+d$. Denote $\ell = (1-c)n+d$. This implies that $|\bigcup_{v\in W}V(G)\setminus N(v)|<\ell|W|$ and therefore
\begin{equation}\label{l4c2e2}
|F(W)|\geq v_{G}-\left|\bigcup_{v\in W} V(G)\setminus N(v)\right| - |W| \geq n-\left(\ell + 1\right)|W|.
\end{equation}

We look at embeddings of $H$ in $G\cup\{e\}$ which contain $e$ (Denote the number of such embeddings by $B^\prime$). We specify such an embedding $\varphi$ in the following way: Start by choosing which edge $uw\in E(H)$ (and in which orientation) will go to $e$. We then iterate over the vertices in $V(H)$ (denote $v_1,\dots,v_{v_H-2}$) and repeatedly choose the image of the $i$th vertex from $F(\{\varphi(u),\varphi(w),\varphi(v_1),\cdots,\varphi(v_{i-1})\})$. From the definition of $F$, $\varphi$ is indeed an embedding. There are $2e_H$ ways to choose the preimage of $e$. From \cref{l4c2e2} there are at least $n-(i+2)(\ell + 1)$ ways to choose the image of the $i$th vertex. Thus:
\begin{align*}
    B^\prime & \geq 2e_H\prod_{i=0}^{v_H-3}(n-(i+2)(\ell + 1))
    \\
     & \geq2e_H\left(n-v_H\left(\ell + 1\right)\right)^{v_H-2}
     \\
     & = 2e_H\left(n-v_H\left(\left(1-c\right)n+d+ 1\right)\right)^{v_H-2}
     \\
     & = 2e_H\left(\left(1-\left(1-c\right)v_H\right)n-\left(d+1\right)v_H \right)^{v_H-2}
     \\
     & = 2e_H\left({c^\prime}^{v_H-2}+o\left(1\right)\right)n^{v_H-2}.
\end{align*}
For $c^\prime=\left(1-(1-c)v_H\right)$ which goes to one as $c$ goes to one.
    
Now notice that 
\begin{align*}
B \geq |S|\left(B^\prime - \left|\left\{\varphi \in \semb{H}{G\cup \{e\}} : e,v_0 \in \varphi(H)\right\}\right|\right)
\end{align*}

By \Cref{nogas}, 
\begin{align*}
\left|\left\{\varphi \in \semb{H}{G\cup \{e\}} : e,v_0 \in \varphi(H)\right\}\right| \leq v_H^3v_G^{v_H-3}=o(1)\cdot n^{v_H-2}
\end{align*}
and therefore 
\begin{align*}
B\geq (cn)2e_H\left({c^\prime}^{v_H-2}+o(1)\right)n^{v_H-2}\geq2e_H\left({c^\prime}^{v_H-1}+o(1)\right)v_G^{v_H-1}.
\end{align*}

Finally, we know that $H$ is elementary, and thus contains at least one path of length three or a cycle, and therefore $2e_H > v_H$. Recall also that as $c$ tends to one and $n$ tends to $\infty$, ${c^\prime}^{v_H-1}+o(1)$ tends to one. Therefore, for sufficiently large $n, c<1$ we have:
\begin{align*}
     v_H\cdot v_G^{v_H-1} & < 2e_H\left({c^\prime}^{v_H-1}+o(1)\right)v_G^{v_H-1}
     \\
     \implies & A < B
     \\
     \implies & \nemb{H}{G} < \nemb{H}{\bar{G}}
\end{align*}  
a contradiction. Thus, for sufficiently large $c<1, n$ we must have $v_G=n$, i.e.\ $G$ is a clique.
\end{proof}

We can now prove a slightly weaker version of our main result for elementary graphs:
\begin{corollary}\label{weakMain}
Let $H$ be an elementary graph. For any sufficiently large $n$ and a graph $G$ with ${\binom{n}{2}}$ edges, we have $\nemb{H}{G}\leq\nemb{H}{K_n}$, with equality only when $G=K_n$.
\end{corollary}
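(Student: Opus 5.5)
The plan is to combine the three lemmas of \Cref{section:ProofOfMain}, being careful about the order in which the constants are fixed. Fix an elementary graph $H$. First choose $c<1$ large enough that \Cref{ShowAndhandleO1} applies with this $c$; this fixes $n_3=n_3(H,c)$ such that every $\binom{n}{2}$-extremal graph with minimum degree at least $cn$ is $K_n$ for $n>n_3$. Next choose $\varepsilon>0$ small enough that both \Cref{separation} (with this $c$) and \Cref{handleClique} apply. This gives thresholds $n_1=n_1(H,\varepsilon,c)$ and $n_2=n_2(H,\varepsilon)$. Set $n_0=\max(n_1,n_2,n_3)$.

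Now let $n>n_0$ and let $G$ be any $\binom{n}{2}$-extremal graph for $H$; such a graph exists because only finitely many graphs without isolated vertices have $\binom{n}{2}$ edges. By \Cref{separation}, one of two cases holds. In case (1), $\delta(G)\geq cn$, and \Cref{ShowAndhandleO1} gives $G=K_n$. In case (2), $G$ contains a clique $G'$ of order at least $(1-\varepsilon^4)n$, and every outside vertex has at most $(1-\varepsilon)n$ neighbours in $G'$; these are exactly the hypotheses of \Cref{handleClique}, so again $G=K_n$. Hence $K_n$ is the unique $\binom{n}{2}$-extremal graph for $H$.

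Both parts of the corollary follow. For any $G$ with $\binom{n}{2}$ edges,
\[
\nemb{H}{G}\leq\emb{H}{\tbinom{n}{2}}=\nemb{H}{K_n},
\]
since the extremal value is attained by $K_n$. If equality holds, then $G$ is itself extremal, so $G=K_n$ by the previous paragraph.

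The only delicate point is the order of quantifiers. \Cref{ShowAndhandleO1} needs $c$ close to $1$, and the threshold $\varepsilon_0$ in \Cref{separation} depends on $c$. So $c$ must be fixed first, then $\varepsilon$ below the minimum of the two lemmas' requirements, and only then $n$. Because all these choices depend only on $H$, the resulting threshold $n_0$ depends only on $H$, as the statement requires.
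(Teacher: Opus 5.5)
Your proposal is correct and follows the paper's proof. Fix $c$ from \Cref{ShowAndhandleO1}, then $\varepsilon$, then $n$, and use the dichotomy of \Cref{separation} to land in the hypotheses of either \Cref{ShowAndhandleO1} or \Cref{handleClique}; your explicit choice of $\varepsilon$ below the threshold of \Cref{separation} for the fixed $c$ (the paper only writes $\varepsilon<1-c$) and your note that an extremal graph exists are welcome clarifications of the same argument.
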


\begin{proof}[Proof of \Cref{weakMain}]
Let $H$ be an elementary graph. We wish to show that there exists $n_0\coloneqq n_0(H)$ such that for any $n>n_0$, the only graph that is $\binom{n}{2}$-extremal for $H$ is $K_n$.

By \Cref{ShowAndhandleO1} there exists $c<1, n_2$, such that for any $n>n_2$ if $G$ is an $\binom{n}{2}$-extremal graph for $H$ and the minimum degree of $G$ is at least ${c} \cdot n$, then $G=K_n$.

By \Cref{handleClique} there exists $n_1, 0<\varepsilon<1-c$ such that for any $n>n_1$ if a graph $G$ is $\binom{n}{2}$-extremal for $H$, contains a clique $G^\prime$ of order at least $(1-\varepsilon^4)n$, and for every vertex $v$ outside of $G^\prime$ we have $\left|N\left(v\right)\cap V(G^{\prime})\right|\leq\left(1-\varepsilon\right)n$, then we know that $G=K_n$.

Finally, by \Cref{separation} there exists $n_3$, such that for any $n>\max(n_1,n_2,n_3)$ if $G$ is an $\binom{n}{2}$-extremal graph for $H$ then the conditions of either \Cref{handleClique} or \Cref{ShowAndhandleO1} must hold. In either case we get that $G=K_n$.
\end{proof}

Finally, the proof of our main result.
\begin{proof}[Proof of \Cref{main}]

For the first direction:
Let $H$ be an LSD graph. By \Cref{prop:LSDToElementary} $H$ has a spanning subgraph $H^\prime$. By \Cref{weakMain} there exists $n_0$ such that for any $n>n_0$ the only $\binom{n}{2}$-extremal graph for $H^\prime$ is $K_n$. By \Cref{prop:spannedBound} this implies that for any graph $G$ of size $\binom{n}{2}$, $\nemb{H}{G} \leq \nemb{H^\prime}{G} \leq \nemb{H^\prime}{K_n}=\nemb{H}{K_n}$ with equality only when $G=K_n$. That is, for any $n>n_0$ the only $\binom{n}{2}$-extremal graph for $H$ is $K_n$.

As for the other direction, let $H$ be a non LSD graph.
If $\alpha^\ast(H)\neq \frac{v_H}{2}$ then $\nemb{H}{K_n} \leq n^{v_H} = o\left(\binom{n}{2}^{\alpha^{\ast}(H)}\right)$ and by \Cref{nogas} $\nemb{H}{\binom{n}{2}} = \Theta\left(\binom{n}{2}^{\alpha^\ast\left(H\right)}\right)$. Therefore since $\alpha^\ast(H)>v_H/2$ there exist an arbitrarily large $n$ and a graph $G$ of size $\binom{n}{2}$ such that $\nemb{H}{K_n} < \nemb{H}{G}$.

Otherwise if $\alpha^\ast(H) = \frac{v_H}{2}$ then, since $H$ is not LSD, it must be a matching. If $H$ is a matching of size one, every graph is extremal for $H$. If $H$ is a matching of size at least two, the only $m$-extremal graph for $H$ is the matching of size $m$. In either case there exist an arbitrarily large $n$ and a graph $G\neq K_n$ of size $\binom{n}{2}$ such that $\nemb{H}{K_n} \leq \nemb{H}{G}$.
\end{proof}

\section*{Acknowledgments}
The author is deeply grateful to their advisor, Wojciech Samotij, for the suggestion of this problem, many helpful discussions, and constant feedback throughout the evolution of this paper.

\bibliographystyle{amsplain}
\bibliography{references}

@article {Erdos1962,
    AUTHOR = {Erd\H{o}s, P.},
     TITLE = {On the number of complete subgraphs contained in certain
              graphs},
   JOURNAL = {Magyar Tud. Akad. Mat. Kutat\'o{} Int. K\"ozl.},
  FJOURNAL = {A Magyar Tudom\'anyos Akad\'emia. Matematikai Kutat\'o{}
              Int\'ezet\'enek K\"ozlem\'enyei},
    VOLUME = {7},
      YEAR = {1962},
     PAGES = {459--464},
      ISSN = {0541-9514},
   MRCLASS = {55.10 (05.65)},
  MRNUMBER = {151956},
MRREVIEWER = {J.\ W.\ Moon},
}

@article {UpperTailsViaHighMoments,
    AUTHOR = {Harel, Matan and Mousset, Frank and Samotij, Wojciech},
     TITLE = {Upper tails via high moments and entropic stability},
   JOURNAL = {Duke Math. J.},
  FJOURNAL = {Duke Mathematical Journal},
    VOLUME = {171},
      YEAR = {2022},
    NUMBER = {10},
     PAGES = {2089--2192},
      ISSN = {0012-7094,1547-7398},
   MRCLASS = {60F10 (05C80)},
  MRNUMBER = {4484206},
       DOI = {10.1215/00127094-2021-0067},
       URL = {https://doi.org/10.1215/00127094-2021-0067},
}

@unpublished{UpperTailBoundsForIrregularGraphs,
author = {Karmakar, Shaibal and Basak, Anirban},
year = {2025},
month = {03},
pages = {},
title = {Upper tail bounds for irregular graphs},
eprint={2503.05311},
archivePrefix={arXiv},
primaryClass={math.CO},
note = {Preprint available at arXiv.2503.05311},
url={https://arxiv.org/abs/2503.05311},
}

@book{FractionalGraphTheory,
author = {Scheinerman, Edward and Ullman, Daniel},
year = {2008},
month = {01},
pages = {},
title = {Fractional Graph Theory -- A Rational Approach},
publisher = {Dover Publications}
}

@article{Alon1981,
    AUTHOR = {Alon, Noga},
     TITLE = {On the number of subgraphs of prescribed type of graphs with a
              given number of edges},
   JOURNAL = {Israel J. Math.},
  FJOURNAL = {Israel Journal of Mathematics},
    VOLUME = {38},
      YEAR = {1981},
    NUMBER = {1-2},
     PAGES = {116--130},
      ISSN = {0021-2172},
   MRCLASS = {05C35},
  MRNUMBER = {599482},
MRREVIEWER = {David\ E.\ Daykin},
       DOI = {10.1007/BF02761855},
       URL = {https://doi.org/10.1007/BF02761855},
}

@article{OnTheNumberOfCopiesOfOneHypergraphInAnother,
    AUTHOR = {Friedgut, Ehud and Kahn, Jeff},
     TITLE = {On the number of copies of one hypergraph in another},
   JOURNAL = {Israel J. Math.},
  FJOURNAL = {Israel Journal of Mathematics},
    VOLUME = {105},
      YEAR = {1998},
     PAGES = {251--256},
      ISSN = {0021-2172,1565-8511},
   MRCLASS = {05C65},
  MRNUMBER = {1639767},
MRREVIEWER = {Nigel\ Martin},
       DOI = {10.1007/BF02780332},
       URL = {https://doi.org/10.1007/BF02780332},
}

@unpublished{TutorialLecturesOnEntropy,
title = {Three tutorial lectures on entropy and counting},
author = {Galvin, David},
year = {2014},
month = {06},
pages = {14-17},
eprint={1406.7872},
archivePrefix={arXiv},
primaryClass={math.CO},
note = {Preprint available at arXiv:1406.7872},
doi = {arXiv:1406.7872},
url={https://arxiv.org/abs/1406.7872},
}

@article {PathsInGraphs,
    AUTHOR = {Bollob\'as, B. and Sarkar, A.},
     TITLE = {Paths in graphs},
   JOURNAL = {Studia Sci. Math. Hungar.},
  FJOURNAL = {Studia Scientiarum Mathematicarum Hungarica. Combinatorics,
              Geometry and Topology (CoGeTo)},
    VOLUME = {38},
      YEAR = {2001},
     PAGES = {115--137},
      ISSN = {0081-6906,1588-2896},
   MRCLASS = {05C35 (05C38)},
  MRNUMBER = {1877773},
       DOI = {10.1556/SScMath.38.2001.1-4.8},
       URL = {https://doi.org/10.1556/SScMath.38.2001.1-4.8},
}

@article{PathsOfLengthFour,
    AUTHOR = {Bollob\'as, B\'ela and Sarkar, Amites},
     TITLE = {Paths of length four},
   JOURNAL = {Discrete Math.},
  FJOURNAL = {Discrete Mathematics},
    VOLUME = {265},
      YEAR = {2003},
    NUMBER = {1-3},
     PAGES = {357--363},
      ISSN = {0012-365X,1872-681X},
   MRCLASS = {05C38 (05C35)},
  MRNUMBER = {1969385},
       DOI = {10.1016/S0012-365X(02)00878-6},
       URL = {https://doi.org/10.1016/S0012-365X(02)00878-6},
}

@unpublished{ProofsOfTwoConjecturesOfAlonOnSubgraphCounts,
      title={Proofs of Two Conjectures of {A}lon on Subgraph Counts}, 
      author={Peiru Kuang and Shuang Sun and Yan Wang and Jiasheng Zeng},
      year={2026},
      month={06},
      eprint={2606.18321},
      archivePrefix={arXiv},
      primaryClass={math.CO},
      url={https://arxiv.org/abs/2606.18321},
      note = {Preprint available at 	arXiv:2606.18321},
}

@article {Alon1986,
    AUTHOR = {Alon, Noga},
     TITLE = {On the number of certain subgraphs contained in graphs with a
              given number of edges},
   JOURNAL = {Israel J. Math.},
  FJOURNAL = {Israel Journal of Mathematics},
    VOLUME = {53},
      YEAR = {1986},
    NUMBER = {1},
     PAGES = {97--120},
      ISSN = {0021-2172},
   MRCLASS = {05C30},
  MRNUMBER = {861901},
MRREVIEWER = {P.\ Erd\H os},
       DOI = {10.1007/BF02772673},
       URL = {https://doi.org/10.1007/BF02772673},
}

@article {Furedi1992,
    AUTHOR = {F\"uredi, Z.},
     TITLE = {Graphs with maximum number of star-forests},
   JOURNAL = {Studia Sci. Math. Hungar.},
  FJOURNAL = {Studia Scientiarum Mathematicarum Hungarica. Combinatorics,
              Geometry and Topology (CoGeTo)},
    VOLUME = {27},
      YEAR = {1992},
    NUMBER = {3-4},
     PAGES = {403--407},
      ISSN = {0081-6906,1588-2896},
   MRCLASS = {05C35},
  MRNUMBER = {1218163},
MRREVIEWER = {Jozef\ \v Sir\'a\v n},
}

@article {Keevash2008,
    AUTHOR = {Keevash, Peter},
     TITLE = {Shadows and intersections: stability and new proofs},
   JOURNAL = {Adv. Math.},
  FJOURNAL = {Advances in Mathematics},
    VOLUME = {218},
      YEAR = {2008},
    NUMBER = {5},
     PAGES = {1685--1703},
      ISSN = {0001-8708,1090-2082},
   MRCLASS = {05D05},
  MRNUMBER = {2419936},
MRREVIEWER = {David\ J.\ Grynkiewicz},
       DOI = {10.1016/j.aim.2008.03.023},
       URL = {https://doi.org/10.1016/j.aim.2008.03.023},
}

@article {KruskalKatonaTypeProblemsEntropy,
    AUTHOR = {Chao, Ting-Wei and Yu, Hung-Hsun Hans},
     TITLE = {Kruskal-{K}atona-type problems via the entropy method},
   JOURNAL = {J. Combin. Theory Ser. B},
  FJOURNAL = {Journal of Combinatorial Theory. Series B},
    VOLUME = {169},
      YEAR = {2024},
     PAGES = {480--506},
      ISSN = {0095-8956,1096-0902},
   MRCLASS = {05D40 (05C15 05C35)},
  MRNUMBER = {4789260},
MRREVIEWER = {Grace\ McCourt},
       DOI = {10.1016/j.jctb.2024.08.003},
       URL = {https://doi.org/10.1016/j.jctb.2024.08.003},
}
\end{document}